\begin{document}

\newtheorem{definition}{Definition}
\newtheorem{theorem}{Theorem}
\newtheorem{proposition}[theorem]{Proposition}
\newtheorem{lemma}[theorem]{Lemma}
\newtheorem{corollary}[theorem]{Corollary}
\newtheorem{question}[theorem]{Question}
\newtheorem{remark}[theorem]{Remark}
\newtheorem{example}[theorem]{Example}
\newtheorem{conjecture}[theorem]{Conjecture}

\newtheorem{correction}{Correction}

\def\A{{\mathbb{A}}}
\def\C{{\mathbb{C}}}

\def\L{{\mathbb{L}}}
\def\O{{\mathcal{O}}}
\def\P{{\mathbb{P}}}
\def\Q{{\mathbb{Q}}}
\def\Z{{\mathbb{Z}}}
\def\Ch{{\rm Ch}}
\def\p{{\mathbf{p}}}

\def\sp{{\rm SP}}
\def\rank{{\rm rank}}

\def\mC{{\mathcal{C}}}
\def\D{{\mathcal{D}}}

\title{Notes on  Algebraic Cycles and Homotopy theory}
\author{Wenchuan Hu}
\date{November 26, 2008}

\keywords{Algebraic cycles; Chow variety; $d$-connectedness}
\address{Department of Mathematics, Massachusetts Institute of Technology, Room 2-363B,
 77 Massachusetts Avenue,
 Cambridge, MA 02139, USA
}

\email{wenchuan@math.mit.edu}

\begin{abstract}
We show  that a conjecture by Lawson holds, that is, the inclusion
from the Chow variety $C_{p,d}(\P^n)$ of all effective algebraic $p$-cycles of
degree $d$ in $n$-dimensional projective space $\P^n$ to the space $\mC_{p}(\P^n)$
of effective algebraic $p$-cycles in $\P^n$ is $2d$-connected. As a result,
 the homotopy and homology groups of $C_{p,d}(\P^n)$ are calculated up to  $2d$.
We also show an analogous statement for Chow variety $C_{p,d}(\P^n)$
over algebraically closed field $K$ of arbitrary characteristic
and compute their etale homotopy groups up to $2d$.
\end{abstract}

\maketitle

\tableofcontents

\section{Introduction}
Let $\P^n$ be the  complex projective space of  dimension $n$ and let $C_{p,d}(\P^n)$
be the space of effective algebraic $p$-cycles of degree $d$ on $\P^n$. A
fact proved by Chow and Van der Waerden is that $C_{p,d}(\P^n)$ carries the structure of a closed complex algebraic set.
Hence it carries the structure of a compact Hausdorff space.

Let $l_0\subset \P^n$ be a fixed $p$-dimensional linear subspace. For each $d\geq 1$, we consider the analytic embedding
\begin{equation}\label{eqn1}
i:C_{p,d}(\P^n)\hookrightarrow C_{p,d+1}(\P^n)
\end{equation}
defined by $c\mapsto c+l_0$. From this sequence of embeddings we can form the union
$$
\mC_{p}(\P^n)=\lim_{d\to\infty} C_{p,d}(\P^n).
$$

The topology on $\mC_{p}(\P^n)$ is the weak topology for $\{ C_{p,d}(\P^n)\}_{d=1}^{\infty}$, that is, a set $C\subset \mC_{p}(\P^n)$
is closed if and only if $C\cap  C_{p,d}(\P^n)$ is closed in $C_{p,d}(\P^n)$ for all $d\geq 1$. For details, the reader is referred to the
paper \cite{Lawson1}. The background on homotopy theory is referred to the book \cite{Whitehead}.

In this note, we will prove the following main result.
\begin{theorem}\label{Th2}
For all $n,p$ and $d$, the  inclusion  $i:C_{p,d}(\P^n)\hookrightarrow \mC_{p}(\P^n)$ induced by Equation (\ref{eqn1})
is $2d$-connected.
\end{theorem}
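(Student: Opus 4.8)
The plan is to first reduce Theorem~\ref{Th2} to a connectivity statement for the individual stabilization maps $i_d\colon C_{p,d}(\P^n)\to C_{p,d+1}(\P^n)$, $c\mapsto c+l_0$, namely that each $i_d$ is $2d$-connected. Granting this, the theorem is formal: for $j<2d$ every $i_{d'}$ with $d'\ge d$ induces an isomorphism on $\pi_j$ (since $j<2d\le 2d'$), so $\pi_j(C_{p,d}(\P^n))\to\pi_j(\mathcal C_p(\P^n))=\varinjlim_{d'}\pi_j(C_{p,d'}(\P^n))$ is an isomorphism, while on $\pi_{2d}$ the map $i_d$ is surjective and all later $i_{d'}$ are isomorphisms, so the colimit map is onto. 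Because $\mathcal C_p(\P^n)$ carries the weak topology, its homotopy groups are indeed this colimit. I would also record at the outset that the $C_{p,d}(\P^n)$ are connected and simply connected, so that each pair $\bigl(C_{p,d+1}(\P^n),C_{p,d}(\P^n)\bigr)$ is $1$-connected and the relative Hurewicz theorem lets me detect the connectivity of $i_d$ homologically.

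Next I would replace $i_d$ by its cofiber. As $C_{p,d+1}(\P^n)$ is compact Hausdorff and $i_d$ identifies $C_{p,d}(\P^n)$ with the closed subvariety of cycles containing $l_0$ as a component, the quotient is the one-point compactification of the open complement:
\[
C_{p,d+1}(\P^n)/i_d\bigl(C_{p,d}(\P^n)\bigr)\;\cong\;U_{d+1}^{+},\qquad U_{d+1}=\{\,z\in C_{p,d+1}(\P^n): z\text{ does not contain }l_0\,\}.
\]
By the long exact sequence of the pair and relative Hurewicz, showing that $i_d$ is $2d$-connected is equivalent to
\[
\tilde H_i\bigl(U_{d+1}^{+}\bigr)=H_i\bigl(C_{p,d+1}(\P^n),C_{p,d}(\P^n)\bigr)=0\qquad(i\le 2d).
\]
For $p=0$ this is classical: $C_{0,d}(\P^n)=\mathrm{SP}^d(\P^n)$, the cofiber is the symmetric smash $(\P^n)^{\wedge(d+1)}/\mathfrak S_{d+1}$, and since $\P^n$ is $1$-connected with bottom cell in real degree $2$ this quotient is $(2d+1)$-connected. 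This settles the theorem for $p=0$ and shows that the factor $2$ measures the complex codimension of one point-condition.

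For $p\ge 1$ I would prove the vanishing by induction on $p$, reducing $p$-cycles in $\P^n$ to $(p-1)$-cycles in a generic hyperplane $H\cong\P^{n-1}$ (chosen with $l_0\not\subset H$, so that $l_0\cap H$ is the fixed $(p-1)$-plane used to stabilize in $H$). Intersection with $H$ sends a degree $d+1$ $p$-cycle $Z$ in general position to the degree $d+1$ $(p-1)$-cycle $Z\cdot H$, and it carries ``$Z$ contains $l_0$'' to ``$Z\cdot H$ contains $l_0\cap H$'', so it is compatible with the filtration by $l_0$ and with the complements $U_{d+1}$. In the spirit of the Lawson suspension theorem, this map should organize the generic part of $U_{d+1}$ as a quasifibration over the corresponding lower-dimensional cycle space, whose base case is the symmetric-product computation above; the long exact sequences then assemble the required vanishing in the range $i\le 2d$.

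The \emph{main obstacle} is the control of the degenerate locus at finite degree. The intersection map $Z\mapsto Z\cdot H$ is undefined exactly on cycles having a component contained in $H$, and the argument requires this bad locus to have complex codimension at least $d+1$ — i.e. real codimension $>2d$ — so that it cannot contribute to $\tilde H_i(U_{d+1}^{+})$ in the range $i\le 2d$. (For plane curves, for example, the cycles divisible by the line $H$ form a subspace of codimension $d+2$ in $C_{1,d+1}(\P^2)\cong\P^{\binom{d+3}{2}-1}$, which indeed grows with $d$.) Establishing such a bound uniformly in $n$ and $p$, and verifying that the hyperplane-section map is a quasifibration over its image in the relevant range, is the technical heart of the proof; once both are in hand, the inductive long exact sequences yield $\tilde H_i(U_{d+1}^{+})=0$ for $i\le 2d$, and hence Theorem~\ref{Th2}.
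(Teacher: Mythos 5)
Your formal reduction to the $2d$-connectivity of each stabilization map $i_d\colon C_{p,d}(\P^n)\to C_{p,d+1}(\P^n)$, the identification of its cofiber with the one-point compactification of the complement $U_{d+1}$, and the treatment of the case $p=0$ via symmetric smash powers are all sound, and they constitute a genuinely different route from the paper. The paper never analyzes the cofiber of $i_d$: it proves, by induction on $p$ using Lawson's divisor-lifting maps $\Psi_D$ and $F_{tD}$ together with the estimate $\mathrm{codim}_{\C}B_c\geq\binom{p+e+1}{e}$, that the multiplication map $c\mapsto e\cdot c$ is injective on $\pi_k$ for $k\le 2d$ once $e$ is large, and then combines this with Lawson's previously established surjectivity of $i_*$ through dimension $2d$ and the fact that the stable groups are $\Z$ or $0$. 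Your approach, if it worked, would be more geometric and would give the sharper statement that each finite-stage inclusion is $2d$-connected directly; but the inductive step for $p\ge 1$ has a genuine gap, and the specific codimension bound you say you need is false.

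The locus of degree-$(d+1)$ $p$-cycles in $\P^n$ having a component contained in a fixed hyperplane $H$ does \emph{not} have complex codimension growing with $d$ once $p<n-1$. Concretely, for $p=1$, $n=3$, the sums of $d+1$ pairwise disjoint lines form an irreducible component of $C_{1,d+1}(\P^3)$ of dimension $4(d+1)$ (it is a genuine component because a cycle with disconnected support is not a limit of irreducible curves), and the condition that one of the lines lie in the plane $H$ cuts out a subvariety of dimension $2+4d$: complex codimension $2$, independent of $d$. Your plane-curve example is misleading precisely because for divisors ($p=n-1$) divisibility by $H$ does impose $\binom{n+d}{d+1}$ conditions; for cycles of higher codimension the count stabilizes at a constant. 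So for $d\ge 2$ the degenerate locus of the hyperplane-section map is far too large to be discarded in the range $i\le 2d$, and the ``technical heart'' you defer cannot be carried out with a single hyperplane. This is exactly the difficulty Lawson's construction is built to overcome: he replaces the hyperplane by a degree-$e$ divisor $D$ in an auxiliary $\P^{n+2}$ and lifts cycles into $D$ via $\Psi_D(c)=(\Sigma_{x_0}c)\cdot D$, precisely so that the bad set of choices has codimension $\binom{p+e+1}{e}$, which can be made arbitrarily large by increasing $e$; some substitute for this freedom (a large family of ``sections'' to move within) is indispensable. A secondary point: $\tilde H_i(U^+)$ is Borel--Moore homology of $U$, and excising a closed subset of small dimension controls Borel--Moore homology only in degrees \emph{above} the real dimension of the excised set, so even with a good codimension bound the bookkeeping in the low degrees $i\le 2d$ would need a different argument (e.g.\ moving maps of spheres off the bad set, as in the paper) rather than the duality heuristic you invoke.
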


This fact was conjectured by Lawson \cite{Lawson1}, who had proved that
the inclusion $i:C_{p,d}(\P^n)\hookrightarrow \mC_{p}(\P^n)$
has a right homotopy inverse through dimension $2d$.

The method in the proof of Theorem \ref{Th2}  comes from Lawson in his proof of the
Complex Suspension Theorem \cite{Lawson1}, i.e., the complex suspension to the space of $p$-cycles on $\P^n$
yields a homotopy equivalence to the space of $(p+1)$-cycles on  $\P^{n+1}$.

As applications of the main result, we calculate the first $2d+1$ homotopy  and homology groups of $C_{p,d}(\P^n)$.

The analogous result of  Theorem \ref{Th2} for Chow varieties over algebraically closed fields is obtained (cf. Theorem \ref{Th4.1}).

\thanks{\emph{Acknowledgements}: I would like to gratitude professor Blaine Lawson for pointing out a gap in an early version of this note
as well as helpful advice on the organization of the paper.}

\section{The Method in the proof of the Complex Suspension Theorem}

Now we briefly review  Lawson's method in the proof of the Complex Suspension Theorem. The materials in this section
can be found in \cite{Lawson1}, \cite{Friedlander1} and \cite{Lawson2}.

Fix a hyperplane $\P^n\subset\P^{n+1}$ and a point $\P^0\in \P^{n+1}-\P^{n}$.
For any non-negative integer $p$ and $d$, set
$$T_{p+1,d}(\P^{n+1}):=\big\{c=\sum n_iV_i\in C_{p+1,d}(\P^{n+1})|\dim(V_i\cap\P^n)=p, ~\forall i\big \}.$$
(when $d=0$, $C_{p,0}(\P^n)$ is defined to be the empty cycle.)

The following result was proved by Lawson and a generalized algebraic version was proved by Friedlander \cite{Friedlander1}.
 \begin{proposition}[\cite{Lawson1}]\label{prop1.2}
The set  $T_{p+1,d}(\P^{n+1})$  is Zariski open in $C_{p+1,d}(\P^{n+1})$. Moreover, $T_{p+1,d}(\P^{n+1})$ is homotopy equivalent to
$C_{p,d}(\P^{n})$. In particular, their corresponding homotopy groups are isomorphic, i.e.,
\begin{equation}\label{eq12}
\pi_*(T_{p+1,d}(\P^{n+1}))\cong\pi_*(C_{p,d}(\P^{n})).
\end{equation}
\end{proposition}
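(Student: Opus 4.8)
The plan is to realize the homotopy equivalence through two explicit geometric maps — the algebraic suspension and the hyperplane restriction — and to show they are mutually inverse up to homotopy. Fix homogeneous coordinates on $\P^{n+1}$ so that $\P^n=\{x_{n+1}=0\}$ and $\P^0=[0:\cdots:0:1]$. First I would define the algebraic suspension
\[
\Sigma: C_{p,d}(\P^n)\longrightarrow T_{p+1,d}(\P^{n+1}),
\]
sending a cycle to its join (projective cone) with the vertex $\P^0$. For an irreducible $V\subseteq\P^n$ the cone $\Sigma V$ has dimension $p+1$, the same degree $d$, and meets $\P^n$ exactly in $V$, so $\Sigma V$ lies in $T_{p+1,d}(\P^{n+1})$; extending additively and invoking the functoriality of the join on Chow coordinates shows $\Sigma$ is a continuous (indeed algebraic) embedding. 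In the other direction I would define the restriction
\[
\pi: T_{p+1,d}(\P^{n+1})\longrightarrow C_{p,d}(\P^n),\qquad \pi(c)=c\cdot\P^n,
\]
by intersecting with the hyperplane. On $T_{p+1,d}(\P^{n+1})$ this intersection is proper by definition, so $\pi(c)$ is a well-defined effective $p$-cycle, and since a proper hyperplane section preserves degree we get $\deg\pi(c)=\deg c=d$.

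It is immediate that $\pi\circ\Sigma=\mathrm{id}$, since the cone over $V$ meets $\P^n$ precisely in $V$. The substance is to show $\Sigma\circ\pi\simeq\mathrm{id}$ on $T_{p+1,d}(\P^{n+1})$. For this I would use the one-parameter group $g_t\in\mathrm{PGL}_{n+2}$ defined by $g_t[x_0:\cdots:x_n:x_{n+1}]=[x_0:\cdots:x_n:t\,x_{n+1}]$, which fixes $\P^n$ pointwise and fixes $\P^0$. Each $g_t$ preserves the condition of meeting $\P^n$ properly, hence maps $T_{p+1,d}(\P^{n+1})$ into itself, and $\pi(g_t\cdot c)=\pi(c)$. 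A direct computation on a component $V$ shows that the flat limit $\lim_{t\to\infty}g_t\cdot V$ is exactly the cone $\Sigma(V\cdot\P^n)$. Writing $s=1/t$ and setting $H(c,s)=g_{1/s}\cdot c$ for $s\in(0,1]$ and $H(c,0)=\Sigma(\pi(c))$ then gives a deformation of $T_{p+1,d}(\P^{n+1})$ through itself, with $H(\cdot,1)=\mathrm{id}$ and $H(\cdot,0)=\Sigma\circ\pi$. Combined with $\pi\circ\Sigma=\mathrm{id}$, this exhibits $\Sigma$ and $\pi$ as homotopy inverses, yielding the homotopy equivalence and the isomorphism (\ref{eq12}).

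The hard part will be the continuity of $H$ as $s\to 0$: one must check that in the weak (Chow) topology the family $g_t\cdot c$ genuinely converges to $\Sigma(\pi(c))$ and that the resulting map $T_{p+1,d}(\P^{n+1})\times[0,1]\to T_{p+1,d}(\P^{n+1})$ is jointly continuous, not merely continuous in $c$ for each fixed $s$. This is exactly where properness of meeting $\P^n$ is used: it guarantees that no component escapes into the hyperplane or drops degree in the limit, so that the degeneration is flat and the Chow point varies continuously. Finally, for the Zariski openness statement I would argue that the complement of $T_{p+1,d}(\P^{n+1})$ in $C_{p+1,d}(\P^{n+1})$ consists of those cycles having a component contained in $\P^n$; this complement is the finite union over $0<e\le d$ of the images of the addition maps $C_{p+1,e}(\P^n)\times C_{p+1,d-e}(\P^{n+1})\to C_{p+1,d}(\P^{n+1})$, and since these are morphisms of projective Chow varieties their images are Zariski closed, so $T_{p+1,d}(\P^{n+1})$ is Zariski open.
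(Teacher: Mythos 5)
Your proposal is correct and follows essentially the same route as the source the paper cites for this statement: Lawson's proof in \cite{Lawson1} establishes the deformation retraction of $T_{p+1,d}(\P^{n+1})$ onto the suspended cycles $\Sigma C_{p,d}(\P^n)$ via exactly this one-parameter family of projective transformations fixing $\P^n$ pointwise and fixing $\P^0$, with $\pi\circ\Sigma=\mathrm{id}$, and proves Zariski openness by observing that the complement is the union of images of the (proper) addition maps from $C_{p+1,e}(\P^n)\times C_{p+1,d-e}(\P^{n+1})$. The paper itself gives no proof, simply quoting the result, so the only caveat is that the joint continuity of your homotopy $H$ at $s=0$, which you correctly flag as the hard part, is precisely the technical core that Lawson carries out in detail and that your sketch defers.
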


Fix linear embedding $\P^{n+1}\subset \P^{n+2}$ and two points $x_0,x_1\in \P^{n+2}-\P^{n+1}$.
Each projection $p_i:\P^{n+2}-\{x_0\}\to \P^{n+1}$ ($i=0,1$) gives us a holomorphic line bundle over $\P^{n+1}$.

Let $D\in C_{n+1,e}(\P^{n+2})$ be an effective divisor of degree $e$ in $\P^{n+2}$
such that $x_0, x_1$ are not in $D$. Denote by $\widetilde{Div_e}(\P^{n+2})\subset C_{n+1,e}(\P^{n+2})$ the subset of all
such $D$.

Any effective cycle $c\in C_{p+1,d}(\P^{n+1})$
 can be lifted to a cycle with support in $D$, defined as follows:
$$ \Psi_D(c)=(\Sigma_{x_0}c)\cdot D.
$$

 The map $\Psi(c, D):=\Psi_D(c)$ is a continuous map with variables $c$ and $D$.
Hence we have a continuous map $\Psi_D:C_{p+1,d}(\P^{n+1})\to C_{p+1,de}(\P^{n+2}-\{x_0,x_1\})$.
The composition of $\Psi_D$ with the projection $(p_0)_*$ is
$(p_0)_*\circ\Psi_D=e$ (where $e\cdot c=c+\cdots+c$ for $e$ times).
The composition of $\Psi_D$ with the projection $(p_1)_*$ gives us a transformation of cycles in
 $\P^{n+1}$ which makes most of them intersecting properly to $\P^{n}$.
To see this, we consider the family of divisors $tD$, $0\leq t\leq 1$, given by scalar multiplication by $t$ in the line bundle
$p_0:\P^{n+2}-\{x_0\}\to \P^{n+1}$.

Assume $x_1$ is not in $tD$ for all $t$. Then the above construction gives us a family transformation
$$ F_{tD}:=(p_1)_*\circ \Psi_{tD}: C_{p+1,d}(\P^{n+1})\to C_{p+1,de}(\P^{n+1})
$$
for $0\leq t\leq 1$. Note that $F_{0D}\equiv e$ (multiplication by $e$).

The question is that for a fixed $c$, which divisors $D\in C_{n+1,e}(\P^{n+2})$
($x_0$ is not in $D$ and $x_1$ is not in $\bigcup_{0\leq t\leq 1} tD$) have the property that
$$ F_{tD}(c)\in T_{p+1,de}(\P^{n+1})
$$
for all $0< t\leq 1$.

Set $B_c:=\{D\in C_{n+1,e}(\P^{n+2})|F_{tD}(c) ~\hbox{is not in $T_{p+1,de} (\P^{n+1})$ for some  $0<t\leq 1$}\}$,
i.e., all degree $e$ divisors on $\P^{n+2}$
 such that some component of
 \begin{equation*}
 (p_1)_*\circ \Psi_{tD}(c)\subset \P^n
 \end{equation*}
 for some $t>0$.

An important calculation we will use later is the following result.
\begin{proposition}[\cite{Lawson1}]
For $c\in C_{p+1,d}(\P^{n+1})$, ${\rm codim}_{\C}B_c\geq \big(^{p+e+1}_{\quad e}\big)$.
\end{proposition}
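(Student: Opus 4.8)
The plan is to convert membership in $B_c$ into a containment condition of the form ``$D$ contains a fixed $(p+1)$-dimensional subvariety'' and then bound the number of linear conditions that such a containment imposes on the coefficients of the degree-$e$ form cutting out $D$.

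First I would describe the bad locus geometrically. Choose $x_0,x_1$ generically and set $H:=\overline{\{x_1\}\cup\P^n}$, the hyperplane of $\P^{n+2}$ spanned by $x_1$ and the fixed $\P^n\subset\P^{n+1}$; then $p_1^{-1}(\P^n)=H\setminus\{x_1\}$ and $x_0\notin H$. By definition $F_{tD}(c)=(p_1)_*\big[(\Sigma_{x_0}c)\cdot(tD)\big]$ fails to lie in $T_{p+1,de}(\P^{n+1})$ precisely when some component of $(\Sigma_{x_0}c)\cdot(tD)$ is carried by $p_1$ into $\P^n$. Since $\Sigma_{x_0}c$ has dimension $p+2$ and $D$ is a hypersurface, this intersection is proper and each of its components has dimension $p+1$; such a component $Z$ satisfies $p_1(Z)\subset\P^n$ if and only if $Z\subset H$. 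By dimension, $Z$ is then a component of the fixed cycle $(\Sigma_{x_0}c)\cap H$ lying on $tD$. Because the cone $\Sigma_{x_0}c$ is preserved by the fiberwise scaling $\phi_t$ of the line bundle $p_0$, the relation $Z\subset tD=\phi_t(D)$ is equivalent to $\phi_t^{-1}(Z)\subset D$. This identifies
\[
B_c=\bigcup_{W}\ \bigcup_{0<t\le1}\ \{\,D:\ \phi_t^{-1}(W)\subset D\,\},
\]
where $W$ runs over the finitely many $(p+1)$-dimensional components of $(\Sigma_{x_0}c)\cap H$.

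The engine of the estimate is a conditions-counting lemma: for any $(p+1)$-dimensional subvariety $V\subset\P^{n+2}$, the degree-$e$ forms vanishing on $V$ form a subspace of codimension at least $\binom{p+e+1}{e}$, so $\{D:V\subset D\}$ has codimension at least $\binom{p+e+1}{e}$ in $C_{n+1,e}(\P^{n+2})$. To see this I would take a general linear projection $\pi\colon\P^{n+2}\dashrightarrow\P^{p+1}$; as $\dim V=p+1$ the map $\pi|_V$ is dominant, hence for a form $f$ on $\P^{p+1}$ the pullback $\pi^*f$ vanishes on $V$ if and only if $f$ vanishes on $\P^{p+1}$. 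Consequently the $\binom{p+e+1}{e}$ pullbacks of the degree-$e$ monomials on $\P^{p+1}$ restrict to linearly independent forms on $V$, which is exactly that many independent linear conditions. Since each $\phi_t^{-1}(W)$ is again $(p+1)$-dimensional ($\phi_t$ being an automorphism fixing $x_0$ and $\P^{n+1}$), the lemma yields the bound $\binom{p+e+1}{e}$ for every single component $W$ and every fixed value of $t$.

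The remaining task, and the one I expect to be the main obstacle, is to pass from these fixed-$t$ estimates to the whole set $B_c$. The outer union over the finitely many $W$ costs nothing, since the codimension of a finite union is the minimum of the codimensions. The delicate point is the union over the continuous parameter $t\in(0,1]$: each slice $\{D:\phi_t^{-1}(W)\subset D\}$ is a linear subspace of the correct codimension, but a naive sweep over a parameter could lower the codimension. To control this I would use the rigidity of the cone: all the subvarieties $\phi_t^{-1}(W)$ lie on the single fixed cone $\Sigma_{x_0}c$ and cut it along the pencil of hyperplanes through $\P^n$, so that $D\in B_c$ forces the restricted form $D|_{\Sigma_{x_0}c}$ to be divisible by a member of this pencil of linear forms. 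Analyzing this divisibility locus, and using that $t$ ranges over a one-real-parameter interval rather than over a full complex line, is what holds the codimension at the asserted value; this is the step demanding the most care, whereas the conditions count itself is routine. For the eventual connectivity application one is moreover free to take the auxiliary degree $e$ large, so that $\binom{p+e+1}{e}$ --- and hence the connectivity of the complement of $B_c$ --- grows without bound.
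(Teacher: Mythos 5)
The paper does not actually prove this proposition; it is imported verbatim from Lawson's Annals paper, where it is the key codimension estimate behind the Complex Suspension Theorem. Your reconstruction is essentially Lawson's own argument, and the two substantive steps are correct. The reduction of $D\in B_c$ to the condition that $D$ contain one of the varieties $\phi_t^{-1}(W)$, with $W$ running over the finitely many $(p+1)$-dimensional components of $(\Sigma_{x_0}c)\cap H$, is right: the intersection $(\Sigma_{x_0}c)\cdot tD$ is proper because every component of the cone passes through $x_0\notin tD$, and a $(p+1)$-dimensional irreducible subset of the pure $(p+1)$-dimensional set $(\Sigma_{x_0}c)\cap H$ must be one of its components. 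The conditions count via a general linear projection $\pi\colon\P^{n+2}\dashrightarrow\P^{p+1}$ is also correct and is the standard (and Lawson's) argument: for a fixed irreducible $V$ of dimension $p+1$, the pullbacks of the $\binom{p+e+1}{e}$ degree-$e$ monomials restrict to linearly independent sections on $V$, so $\{D: V\subset D\}$ is a linear subspace of codimension at least $\binom{p+e+1}{e}$.

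On the point you flag as delicate: your instinct that the union over $t\in(0,1]$ is the only genuine issue is sound, but the resolution is not a sharper argument at fixed $c$ --- in general no such argument exists, because a union of complex subvarieties of complex codimension $N$ over a one-real-parameter family need only have real codimension $2N-1$, and the varieties $\phi_t^{-1}(W)$ really do move with $t$ (the hyperplane $H$ is not invariant under the scalings $\phi_t$). The statement should therefore be read slicewise: for each fixed $t$ the set $\{D: F_{tD}(c)\notin T_{p+1,de}(\P^{n+1})\}$ has complex codimension at least $\binom{p+e+1}{e}$, which is exactly what your two steps establish. That is also the form in which the estimate is consumed: the real-codimension bound $2\binom{p+e+1}{e}-(k+1)$ in Section 3 already budgets one extra real dimension for the parameter $t$ alongside the $k$ dimensions of $S^k$. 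So your proposed ``divisibility by a pencil'' repair is neither developed nor needed; you should simply state and prove the fixed-$t$ (equivalently, fixed-subvariety) version and let the downstream dimension count absorb the $t$-interval.
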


\section{Proof of the first main result}
In the construction of the last section, $F_{tD}$ maps $C_{p+1,d}(\P^{n+1})$ to $C_{p+1,de}(\P^{n+1})$, i.e.,
$$ F_{tD}:=(p_1)_*\circ \Psi_{tD}: C_{p+1,d}(\P^{n+1})\to C_{p+1,de}(\P^{n+1}).
$$
Moreover, the image of $F_{tD}$ is in the Zariski open subset $T_{p+1,d}(\P^{n+1})$ if $D$ is not $B_c$.
We can find such a $D$ if ${\rm codim}_{\C}B_c\geq \big(^{p+e+1}_{\quad e}\big)$ is positive.

Suppose now that $f:S^k\to C_{p+1,d}(\P^{n+1})$ is a continuous map for $0<k\leq 2d$.
We may assume that $f$ is piecewise linear up to homotopy. Then the map $e\cdot f=F_{0D}\circ f$ is homotopic
to a map $S^k\to T_{p+1,de}(\P^{n+1})$. To see this, we consider the family
$$F_{tD}\circ f:S^k\to C_{p+1,de}(\P^{n+1}), \quad 0\leq t\leq 1,
$$
where $D$ lies outside the union $\bigcup_{x\in S^k}B_{f(x)}$. This is a set of real
codimension bigger than or equal to $2\big(^{p+e+1}_{\quad e}\big)-(k+1)$.
Therefore, if $2\big(^{p+e+1}_{\quad e}\big)-(k+1)\geq 1$, i.e.,
$k\leq 2\big(^{p+e+1}_{\quad e}\big)-2$, then such a $D$ exists once
 we choose a large $e$ such that $2d\leq 2\big(^{p+e+1}_{\quad e}\big)-2$.
Therefore we have the following commutative diagram
\begin{equation}\label{eq2}
\xymatrix{
&T_{p+1,d}(\P^{n+1})\ar[r]\ar@{^(->}@{^(->}[d]&T_{p+1,de}(\P^{n+1})\ar@{^(->}[d]\\
S^k\ar[r]^-{f}&C_{p+1,d}(\P^{n+1})\ar[r]^-{F_{tD}}\ar[ur]^{F_D}& C_{p+1,de}(\P^{n+1}),
}
\end{equation}
where $F_D:=F_{1D}$.

\begin{proposition} \label{lemma1.4}
For any integer $e\geq 1$, the  map
$$\phi_e: C_{0,d}(\P^{n})\to C_{0,de}(\P^{n}), ~ \phi_e(c)=e\cdot c$$
 induces injections
$\phi_{e*}: \pi_k(C_{0,d}(\P^{n}))\to \pi_k(C_{0,de}(\P^{n}))$ for $k\leq 2d$.
\end{proposition}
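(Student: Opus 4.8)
The plan is to reinterpret everything in sight as symmetric products and then to reduce the injectivity of $\phi_{e*}$ to the torsion-freeness of the homotopy of the infinite symmetric product. Here $C_{0,d}(\P^n)$ is precisely the $d$-fold symmetric product $\sp^{d}(\P^n)=(\P^n)^{d}/\Sigma_d$; the inclusion of Equation (\ref{eqn1}) is the usual stabilization $\sp^{d}(\P^n)\hookrightarrow\sp^{d+1}(\P^n)$ adding the base point $l_0$; and $\mC_0(\P^n)=\sp^{\infty}(\P^n)$, a topological abelian monoid under addition of cycles. Under these identifications $\phi_e$ is the $e$-fold sum $c\mapsto c+\cdots+c$, and if $j_d,j_{de}$ denote the stabilizations into $\sp^{\infty}(\P^n)$ and $\times e$ denotes the $e$-fold sum on $\sp^{\infty}(\P^n)$, then on the nose
\begin{equation*}
j_{de}\circ\phi_e=(\times e)\circ j_d.
\end{equation*}
Hence it suffices to establish two facts in the range $k\leq 2d$: (i) $(j_d)_*$ is injective on $\pi_k$, and (ii) $(\times e)_*$ is injective on $\pi_k(\sp^{\infty}(\P^n))$. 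Indeed, (i) and (ii) together make the composite $(\times e)_*\circ (j_d)_*=(j_{de})_*\circ\phi_{e*}$ injective, which forces $\phi_{e*}$ to be injective.

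For (ii) I would invoke the Dold--Thom theorem, which gives $\pi_k(\sp^{\infty}(\P^n))\cong\widetilde H_k(\P^n;\Z)$, a free abelian group (namely $\Z$ for even $k$ with $2\leq k\leq 2n$, and $0$ otherwise). Since the monoid structure induces the group addition on these homotopy groups, the $e$-fold sum induces multiplication by $e$, which is injective on a torsion-free group. For (i) I would use the standard cofibration $\sp^{m-1}(\P^n)\hookrightarrow\sp^{m}(\P^n)$ whose cofiber is the symmetric smash power $(\P^n)^{\wedge m}/\Sigma_m$. As $\P^n$ has its bottom reduced cell in dimension $2$, this cofiber has bottom cell in dimension $2m$ and is therefore $(2m-1)$-connected; because every $\sp^{m}(\P^n)$ is simply connected, the relative Hurewicz theorem promotes this to the statement that the pair $(\sp^{m}(\P^n),\sp^{m-1}(\P^n))$ is $(2m-1)$-connected, so the inclusion is an isomorphism on $\pi_k$ for $k\leq 2m-2$. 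Applying this for all $m\geq d+1$ and telescoping---using that $\pi_k$ commutes with the colimit defining $\sp^{\infty}(\P^n)$---shows $j_d$ is an isomorphism, hence injective, on $\pi_k$ for $k\leq 2d$, which is (i).

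The step I expect to be the main obstacle is the connectivity input (i). One must check honestly that the cofiber $(\P^n)^{\wedge m}/\Sigma_m$ is genuinely $(2m-1)$-connected, which comes down to verifying that $\Sigma_m$ acts trivially on the bottom homology $\widetilde H_{2m}\big((\P^n)^{\wedge m}\big)\cong\big(\widetilde H_2(\P^n)\big)^{\otimes m}\cong\Z$; this holds because the generator lives in the even degree $2$, so each transposition acts by the Koszul sign $(-1)^{2\cdot 2}=+1$. One must also confirm simple connectivity of the symmetric products before applying relative Hurewicz, and record explicitly the two identifications on which the argument rests: that $\phi_e$ is literally the $e$-fold sum, and that addition of $0$-cycles induces the group law on $\pi_*\sp^{\infty}(\P^n)$ under Dold--Thom. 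With (i) and (ii) in hand, the conclusion is immediate from the displayed identity.
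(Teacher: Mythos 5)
Your proof is correct, but it takes a genuinely different route from the paper's. The paper also starts from $C_{0,d}(\P^n)\cong \sp^d(\P^n)$, but then works at the level of rational homology: it compares $\sp^d(\P^n)$ with the cartesian power $(\P^n)^d$, uses the diagonal $(\P^n)^d\to(\P^n)^{de}$ together with $H_k(\sp^m(\P^n),\Q)\cong H_k((\P^n)^m,\Q)^{S_m}$ to prove injectivity of $\phi_{e*}$ on rational homology, then combines Dold--Thom with the stability isomorphism $j_*\colon\pi_k(\sp^d(\P^n))\to\pi_k(\sp^\infty(\P^n))$ for $k\le 2d$ to show the Hurewicz map of $\sp^d(\P^n)$ is injective in this range, and finally uses torsion-freeness of $\pi_k(\sp^d(\P^n))$ to upgrade rational injectivity to integral injectivity. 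You instead stabilize at the outset: the identity $j_{de}\circ\phi_e=(\times e)\circ j_d$, the fact that the $e$-fold sum on the topological monoid $\sp^{\infty}(\P^n)$ induces multiplication by $e$ on $\pi_k\cong\widetilde{H}_k(\P^n;\Z)$ (a torsion-free group), and the injectivity of $(j_d)_*$ for $k\le 2d$ give the result in one line. Your route is shorter, identifies $\phi_{e*}$ explicitly as multiplication by $e$ (which also yields Remark \ref{remark1.5} immediately), and bypasses the paper's invariant-homology computation --- in particular its assertion that $\Delta_*(\tilde{\alpha})$ is $S_{de}$-invariant, which is delicate (it already fails for the fundamental class of $(\P^1)^2$ mapped diagonally into $(\P^1)^4$), so avoiding that step is a real advantage.

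Two caveats. Both arguments rest on the same nontrivial input, namely the stability isomorphism $\pi_k(\sp^d(\P^n))\cong\pi_k(\sp^{\infty}(\P^n))$ for $k\le 2d$; the paper simply cites \cite{Dold} and \cite{Milgram} for it, and you should do the same rather than re-derive it, because your sketch of the derivation has a soft spot: the $(2m-1)$-connectivity of $(\P^n)^{\wedge m}/S_m$ does not ``come down to'' the triviality of the $S_m$-action on the bottom homology of $(\P^n)^{\wedge m}$ --- a quotient by a group action can acquire low-dimensional homotopy invisible on the bottom homology of the total space (compare lens spaces). The correct justification is Dold's or Nakaoka's computation of $H_*(\sp^m X,\sp^{m-1}X)$, or an equivariant skeletal argument showing the quotient has trivial $(2m-1)$-skeleton, followed by relative Hurewicz as you indicate. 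With the stability theorem quoted from the literature, the rest of your argument is complete and correct.
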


\begin{proof}
First note that $C_{0,d}(\P^n)\cong \sp^d(\P^n)$, where $\sp^d(\P^n)$
denotes the $d$-th symmetric product of $\P^n$.
Denote by $\Delta:(\P^n)^d\to ((\P^n)^d)^e=(\P^n)^{de}$ the diagonal map
$\Delta(x)=(x,...,x)$ for  $e$ copies of $x$ and  and $p_1:((\P^n)^d)^e\to (\P^n)^{d}$
the projection on the first component. Hence we have $p_1\circ \Delta= id:(\P^n)^d\to (\P^n)^d$
and $p_{1*}\circ \Delta_*= id_*:H_k((\P^n)^d)\to H_k((\P^n)^d)$ for any integer $k\geq 0$.
This implies the injectivity of $\Delta_*$.

From the commutative diagram of continuous maps of complex varieties
$$\xymatrix{
(\P^n)^d\ar[r]^{\Delta}\ar[d]^-{\pi} &(\P^n)^{de}\ar[d]^-{\pi}\\
\sp^d(\P^n)\ar[r]^-{ \phi_e}&\sp^{de}(\P^n),\\
}
$$
where $\pi:X^m\to \sp^m X $ is the natural projection,
we have the induced commutative diagram on homology groups
\begin{equation}\label{eqn4}
\xymatrix{
H_k((\P^n)^d,\Q)\ar[r]^{\Delta_*}\ar[d]^-{\pi_*} &H_k((\P^n)^{de},\Q)\ar[d]^-{\pi_*}\\
H_k(\sp^d(\P^n),\Q)\ar[r]^-{\phi_{e*}'\otimes \Q}& H_k(\sp^{de}(\P^n),\Q),\\
}
\end{equation}
for any $k\geq 0$.

Now we show that  $\phi_{e*}'\otimes \Q$ is  injective for all $e\geq 1$.
 Let $\alpha\in H_k(\sp^d(\P^n),\Q)$ be an element such that
 $\phi_{e*}'\otimes \Q(\alpha)=0$. Since $H_k((\P^n)^d,\Q)^{S_d}\cong H_k(\sp^d(\P^n),\Q)$ for any $d\geq 1$,
 there is an element $\tilde{\alpha}\in H_k((\P^n)^d,\Q)$ such that $\pi_*(\tilde{\alpha})=\alpha$,
 where $S_d$ is the $d$-th symmetric group
 and $H_k((\P^n)^d,\Q)^{S_d}$ is the $S_d$-invariant subgroup of $H_k((\P^n)^d,\Q)$.
 The element $\tilde{\alpha}$ is $S_d$-invariant. Set $\tilde{\beta}:=\Delta_*(\tilde{\alpha})$.
 From the commutative diagram (\ref{eqn4}), we have $\pi_*(\tilde{\beta})=0$.
  Since $\tilde{\beta}$ is $S_{de}$-invariant and  $\pi_*(\tilde{\beta})=0$,
  we get $\tilde{\beta}=0$ since $H_k((\P^n)^{de},\Q)^{S_{de}}\cong H_k(\sp^{de}(\P^n),\Q)$ (cf. e.g. \cite{Elizondo-Srinivas}).
   This implies that $\tilde{\alpha}=0$ since $\Delta_*$ is injective and
   $\Delta_*(\tilde{\alpha})=\tilde{\beta}=0$. Therefore $\alpha=\pi_*(\tilde{\alpha})=0$,
   i.e.,  $\phi_{e*}'\otimes \Q$ is injective on rational homology groups.

Note that the map $\phi_e:\sp^d(\P^n)\to\sp^{de}(\P^n)$ induces  a commutative diagram
\begin{equation}\label{eqn5}
\xymatrix{\pi_k(\sp^d(\P^n))\ar[r]^{\phi_{e*}}\ar[d]^-{\rho} &\pi_k(\sp^{de}(\P^n))\ar[d]^-{\rho}\\
H_k(\sp^d(\P^n))\ar[r]^-{\phi_{e*}'}& H_k(\sp^{de}(\P^n)),
}
\end{equation}
where $\rho$ are  Hurewicz maps.

 We claim that if $k\leq 2d$,
then $\phi_{e*}:\pi_k(\sp^d(\P^n))\to\pi_k(\sp^{de}(\P^n))$ is injective.
First we will show that $\phi_{e*}\otimes \Q:\pi_k(\sp^d(\P^n))\otimes \Q\to\pi_k(\sp^{de}(\P^n))\otimes \Q$ is injective.

Fix $x_0\in \P^n$. Let  $i:\P^n\to \sp^d(\P^n)$ be the map given by
$i(x)=x+(d-1)x_0$ and let $j:\sp^d(\P^n)\to \sp^{\infty}(\P^n)$ be induced by
the sequence of maps  $\sp^d(\P^n)\to \sp^{d+m}(\P^n), y\mapsto y+m x_0$.
Then $i$ induces a commutative diagram
\begin{equation}\label{eqn6}
\xymatrix{\pi_k(\P^n)\ar[r]^{i_{*}}\ar[d]_-{\rho} &\pi_k(\sp^{d}(\P^n))\ar[r]^{j_*}\ar[d]_-{\rho}&\pi_k(\sp^{\infty}(\P^n))\ar @{.>}[dll]^{DT}\\
H_k(\P^n)\ar[r]^-{i_*'}& H_k(\sp^{d}(\P^n)),&
}
\end{equation}
where the dotted map $DT:\pi_k(\sp^{d}(\P^n))\dashrightarrow H_k(\P^n)$ is
the Dold-Thom isomorphism. This follows from the fact that, for any connected finite CW complex
$X$ and a fixed point $x_0\in X$, the composed map
$\pi_k(X)\stackrel{j_*}{\to} \pi_k(\sp^{\infty}(X))\stackrel{DT}{\longrightarrow} {H}_k(X)$
of the induced  map $j_*$ by the inclusion $j:X=\sp^1(X)\subset \sp^{\infty}(X)$ and the Dold-Thom map $DT:
\pi_k(\sp^{\infty}(X))\to H_k(X)$ is the Hurewicz map (cf. \cite{Dold-Thom}).

Note that $j_*:\pi_k(\sp^d(\P^n))\to \pi_k(\sp^{\infty}(\P^n))$
is an isomorphism for $k\leq 2d$ (cf. \cite{Dold} or \cite{Milgram}). From equation (\ref{eqn6}), we obtain the injectivity
of the Hurewicz map $\rho: \pi_k(\sp^{d}(\P^n))\to H_k(\sp^{d}(\P^n))$ for $k\leq 2d$
since $i_*'\circ DT\circ j_* = \rho$ and the injectivity of $i_*'$.
Now the injectivity of
$\phi_{e*}\otimes \Q:\pi_k(\sp^d(\P^n))\otimes \Q\to \pi_k(\sp^{de}(\P^n))\otimes \Q$ follows from
equation (\ref{eqn5}) as well as the injectivity of $\rho\otimes\Q$ and $\phi_{e*}'\otimes \Q$.
Since  $$\pi_k(\sp^d(\P^n))\cong \pi_k(\sp^{de}(\P^n))\cong\left\{
\begin{array}{lll}
\Z,& \hbox{$0<k\leq 2d$ and $k$ even,}\\
0,& \hbox{$k=0$ or $k\leq 2d$ and $k$ odd}
\end{array}
\right.$$
(cf. \cite{Dold} or \cite{Milgram}), the injectivity of $\phi_{e*}\otimes \Q$ implies the  injectivity of $\phi_{e*}$.
\end{proof}

\begin{remark}\label{remark1.5}
From the proof of Proposition \ref{lemma1.4}, we obtain that
 $\phi_{e*}\otimes \Q:\pi_k(C_{0,d}(\P^n))\otimes\Q\stackrel{\cong}{\to} \pi_k(C_{0,de}(\P^n))\otimes \Q$
  for all $k\leq 2d$. To see this we note that, for $k\leq 2d$,
 both $\pi_k(C_{0,d}(\P^n))$ and $\pi_k(C_{0,de}(\P^n))$ are isomorphic to
 $\pi_k(C_{0}(\P^n))\cong H_k(\P^n)$. So the injectivity of $\phi_{e*}$ implies
 an isomorphism for $\phi_{e*}\otimes \Q$.
\end{remark}

\begin{lemma} \label{lemma1.6}
There is a commutative diagram
$$
\xymatrix{C_{p,d}(\P^{n})\ar[r]^{\Phi_{p,d,n,e}}\ar[d]^{\Sigma}& C_{p,de}(\P^{n})\ar[d]^{\Sigma}\\
T_{p+1,d}(\P^{n+1})\ar[r]^{F_{tD}}& T_{p+1,de}(\P^{n+1}),
}
$$
where $\Phi_{p,d,n,e}$ ($\Phi_{0,d,n,e}=\phi_e$ in Proposition \ref{lemma1.4}) is the composed map
$$
\begin{array}{ccccc}
 C_{p,d}(\P^{n})&\to& C_{p,d}(\P^{n})\times\cdots \times C_{p,d}(\P^{n}) &\to& C_{p,de}(\P^{n})\\
 c&\mapsto& (c,...,c)&\mapsto& e\cdot c
\end{array}
$$
and $D\in \widetilde{Div_e}(\P^{n+2})$, $F_{tD}$ is the restriction of $F_{tD}: C_{p+1,d}(\P^{n+1})\to C_{p+1,de}(\P^{n+1})$.
\end{lemma}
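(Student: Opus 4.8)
The plan is to verify the commutativity cycle by cycle, by writing the complex suspension $\Sigma$ and the transformation $F_{tD}$ out explicitly and exploiting a collinearity hidden in the choice of base points. Recall that $\Sigma:C_{p,d}(\P^n)\to T_{p+1,d}(\P^{n+1})$ is the complex suspension with a fixed vertex $v\in\P^{n+1}-\P^n$, i.e. $\Sigma(c)=v\ast c$ (the join of $v$ with $c$); this lands in $T_{p+1,d}(\P^{n+1})$ because each component of $v\ast c$ meets $\P^n$ in the corresponding component of $c$, of dimension $p$. The essential feature of the construction is that the suspension vertex is chosen to be $v=\overline{x_0x_1}\cap\P^{n+1}$; equivalently $x_0,x_1,v$ are collinear and $\overline{x_0v}=\overline{x_0x_1}=:M$. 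Since $\Phi_{p,d,n,e}(c)=e\cdot c$ and $\Sigma$ is additive, the square amounts to the pointwise identity $F_{tD}(\Sigma c)=\Sigma(e\cdot c)=e\cdot\Sigma(c)$ for every $c\in C_{p,d}(\P^n)$ and every $0<t\le 1$ (the case $t=0$ being $F_{0D}\equiv e$, which is immediate).

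First I would compute the double suspension inside $F_{tD}$. By associativity of the join, $\Sigma_{x_0}(\Sigma c)=\Sigma_{x_0}(v\ast c)=\overline{x_0v}\ast c=M\ast c$, the cone over $c$ with vertex the line $M$. This $(p+2)$-cycle is swept out over $c$: over a point $q$ of $c$ the fiber is the $2$-plane $\Pi_q:=M\ast q=\langle x_0,x_1,v,q\rangle$, containing the collinear triple $x_0,x_1,v$ together with $q$. Intersecting with the degree-$e$ divisor $tD$ replaces each fiber $\Pi_q$ by the plane curve $\Pi_q\cdot tD$ of degree $e$, so $\Psi_{tD}(\Sigma c)$ is swept out by these degree-$e$ curves as $q$ ranges over $c$. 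Here one uses that $tD$ avoids $x_0,x_1$ and meets $M\ast c$ properly, so the intersection has the expected dimension $p+1$ and degree $de$.

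Next I would carry out the projection $(p_1)_*$ fiberwise. Because $x_1$ lies on $M\subset\Pi_q$, projection from $x_1$ collapses the $2$-plane $\Pi_q$ onto a line; since $p_1$ restricts to the identity on $\P^{n+1}$ and hence fixes $v,q\in\Pi_q\cap\P^{n+1}$, that image line is exactly $\overline{vq}$. A line through $x_1$ in $\Pi_q$ meets the degree-$e$ curve $\Pi_q\cdot tD$ in $e$ points by B\'ezout, so $p_1$ restricts to a degree-$e$ map of that curve onto $\overline{vq}$ and the pushforward of the curve is $e\cdot\overline{vq}$. Sweeping over $c$ gives $F_{tD}(\Sigma c)=(p_1)_*\big(\Psi_{tD}(\Sigma c)\big)=e\cdot(v\ast c)=e\cdot\Sigma(c)=\Sigma(e\cdot c)$, which is the asserted commutativity; moreover the output is again a suspension, so it lies in $T_{p+1,de}(\P^{n+1})$ and the restricted map $F_{tD}:T_{p+1,d}(\P^{n+1})\to T_{p+1,de}(\P^{n+1})$ is well defined on the image of $\Sigma$.

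The main obstacle is this fiberwise pushforward step, and in particular recognizing that the collinearity $x_1\in\overline{x_0v}$ is precisely what forces $p_1$ to send each fiber $\Pi_q\cdot tD$ onto the single line $\overline{vq}$ with the uniform multiplicity $e$; without this alignment the image of a degree-$e$ plane curve would not be a multiple of one line and the square would fail to commute. The remaining points, which I would dispatch by a standard genericity argument on $D$, are the properness of $\Pi_q\cdot tD$ (so that dimension and degree are as claimed and no fiber is absorbed into $D$), the fact that $x_1\notin\mathrm{Supp}(\Pi_q\cdot tD)$ since $x_1\notin D$ (so the degree-$e$ map onto $\overline{vq}$ has no collapsed component), and the continuity of the whole construction in $c$, which upgrades the pointwise identity to the equality of maps making the diagram commute.
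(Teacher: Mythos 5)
Your argument is correct and is in substance the same as the paper's: the paper disposes of the commutativity by citing Lemma 5.5 of \cite{Lawson1}, and your join-and-projection computation --- resting on the collinearity $v=\overline{x_0x_1}\cap\P^{n+1}$, which is part of Lawson's setup though left implicit in this note --- is precisely the geometric content of that cited lemma, including the observation that properness of $(\overline{x_0v}\ast c)\cdot tD$ follows since $x_0\notin tD$. The one point you establish only in weakened form is the assertion that $F_{tD}$ carries all of $T_{p+1,d}(\P^{n+1})$ into $T_{p+1,de}(\P^{n+1})$: you verify this only on the image of $\Sigma$, but that is all that is ever used in the application of the lemma (and the unrestricted claim in fact requires a genericity condition on $D$ of the type imposed via $B_c$).
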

\begin{proof}

Note that the image of the restriction of $$F_{tD}: C_{p+1,d}(\P^{n+1})\to C_{p+1,de}(\P^{n+1})$$ on $T_{p+1,d}(\P^{n+1})$ is
in $T_{p+1,de}(\P^{n+1})$. The remaining part follows from Lemma 5.5 in \cite{Lawson1}.
\end{proof}

\begin{proposition}\label{prop1.7}
For integers $p,d,n$, there is an integer $e_{p,d,n}\geq 1$ such that if $e\geq e_{p,d,n}$,
then the map $\Phi_{p+1,d,n+1,e}:C_{p+1,d}(\P^{n})\to C_{p+1,de}(\P^{n+1})$ given by
$c\mapsto e\cdot c$ induces injections
$$(\Phi_{p+1,d,n+1,e})_*: \pi_k(C_{p+1,d}(\P^{n+1}))\to \pi_k(C_{p+1,de}(\P^{n+1}))$$ for $k\leq 2d$.
\end{proposition}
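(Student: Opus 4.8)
The plan is to argue by induction on the dimension $p$ of the cycles, taking Proposition \ref{lemma1.4} as the base case $p=0$ and showing that injectivity of $\Phi_{p,d,n,e}$ on $\pi_k(C_{p,d}(\P^n))$ (for every $n$ and $d$) forces the corresponding statement for $\Phi_{p+1,d,n+1,e}$ on $\pi_k(C_{p+1,d}(\P^{n+1}))$. In this reading Proposition \ref{prop1.7} is precisely the inductive step, with inductive hypothesis the level-$p$ statement (which is Proposition \ref{lemma1.4} when $p=0$). Throughout, $k$ denotes an integer with $k\le 2d$.

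First I would fix the threshold $e_{p,d,n}$. By the codimension estimate recalled in Section 2, ${\rm codim}_\C B_c\ge \binom{p+e+1}{e}$ for $c\in C_{p+1,d}(\P^{n+1})$, so for a piecewise-linear map $f\colon S^k\to C_{p+1,d}(\P^{n+1})$ the union $\bigcup_{x\in S^k}B_{f(x)}$ has real codimension at least $2\binom{p+e+1}{e}-(k+1)$ in the space of divisors. Choosing $e_{p,d,n}$ so large that $2\binom{p+e+1}{e}-2\ge 2d$ for all $e\ge e_{p,d,n}$ guarantees that a generic $D\in\widetilde{Div_e}(\P^{n+2})$ avoids this union; I would also take $e_{p,d,n}$ at least as large as the threshold furnished by the inductive hypothesis one dimension down.

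The heart of the argument is to transport injectivity between the two descriptions of the cycle space. On one side, Lemma \ref{lemma1.6} gives a commutative square whose vertical complex suspensions $\Sigma$ realize the isomorphisms $\pi_k(C_{p,\bullet}(\P^n))\cong\pi_k(T_{p+1,\bullet}(\P^{n+1}))$ of Proposition \ref{prop1.2}; combined with the inductive hypothesis that $\Phi_{p,d,n,e}$ is injective on $\pi_k$, this shows that $F_{tD}\colon T_{p+1,d}(\P^{n+1})\to T_{p+1,de}(\P^{n+1})$ is injective on $\pi_k$ for $k\le 2d$. On the other side, the construction of Section 3 together with diagram (\ref{eq2}) shows that for $f$ and $D$ as above the multiplication map $\Phi_{p+1,d,n+1,e}=F_{0D}$ satisfies $\Phi_{p+1,d,n+1,e}\circ f\simeq F_D\circ f$, with $F_D\circ f$ landing in $T_{p+1,de}(\P^{n+1})$. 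Hence on classes of degree $\le 2d$ the homomorphism $(\Phi_{p+1,d,n+1,e})_*$ factors as $(\iota_{de})_*\circ(F_D)_*$ through $\pi_k(T_{p+1,de}(\P^{n+1}))$, where $\iota_{de}\colon T_{p+1,de}(\P^{n+1})\hookrightarrow C_{p+1,de}(\P^{n+1})$ is the inclusion and $F_D$ restricts on $T_{p+1,d}(\P^{n+1})$ to the injective map just discussed.

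The step I expect to be the main obstacle is passing from injectivity on the open stratum $T_{p+1,d}(\P^{n+1})$ to injectivity on the whole Chow variety $C_{p+1,d}(\P^{n+1})$. The difficulty is that the inclusion $T_{p+1,d}(\P^{n+1})\hookrightarrow C_{p+1,d}(\P^{n+1})$ is \emph{not} $2d$-connected: the complementary locus of cycles having a component inside the hyperplane has complex codimension bounded independently of $d$ (of order $p+2$), so a class of degree up to $2d$ may genuinely meet this boundary and need not be representable in $T_{p+1,d}(\P^{n+1})$. Thus one cannot merely invoke the injectivity of $F_{tD}$ on $T$; the gain must come entirely from the growth of ${\rm codim}_\C B_c$ with $e$, which is exactly what lets $F_D$ sweep the sphere $f(S^k)$ into $T_{p+1,de}(\P^{n+1})$ after multiplication by $e$. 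The delicate point is therefore to verify that the factorization $(\Phi_{p+1,d,n+1,e})_*=(\iota_{de})_*\circ(F_D)_*$ is injective as a composite, i.e.\ that $(F_D)_*\colon\pi_k(C_{p+1,d}(\P^{n+1}))\to\pi_k(T_{p+1,de}(\P^{n+1}))$ is injective and that no nonzero class in its image is killed by $\iota_{de}$. I would attack this by comparing the long exact sequences of the pairs $(C_{p+1,d},T_{p+1,d})$ and $(C_{p+1,de},T_{p+1,de})$ under $F_D$, using the bounded codimension of the boundary to control the relative groups in low degrees and the injectivity of $F_D|_{T}$ to propagate injectivity across the square; failing that, I would reduce directly to the settled case $p=0$ by composing $F_D$ with the hyperplane-intersection map $T_{p+1,de}(\P^{n+1})\to C_{p,de}(\P^n)$ that inverts the suspension.
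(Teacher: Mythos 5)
Your setup---induction on $p$ with Proposition \ref{lemma1.4} as the base case, the threshold $e_{p,d,n}$ chosen from the codimension estimate so that a generic $D$ avoids $\bigcup_{x\in S^k}B_{f(x)}$, and the use of Lemma \ref{lemma1.6} together with the suspension equivalences of Proposition \ref{prop1.2} to see that $F_D$ restricted to $T_{p+1,d}(\P^{n+1})$ is injective on $\pi_k$---is exactly the paper's. But the proposal stops at the decisive step. You correctly isolate the difficulty: the inclusion $T_{p+1,d}(\P^{n+1})\hookrightarrow C_{p+1,d}(\P^{n+1})$ need not be $2d$-connected, so a class in $\pi_k(C_{p+1,d}(\P^{n+1}))$ need not be representable in the open stratum, and injectivity of $(F_D)_*$ on $\pi_k(T_{p+1,d}(\P^{n+1}))$ does not by itself give injectivity of the composite $(\iota_{de})_*\circ(F_D)_*$ on $\pi_k(C_{p+1,d}(\P^{n+1}))$. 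Having named the obstacle, however, you only list two possible attacks---comparing the long exact sequences of the pairs $(C_{p+1,\bullet},T_{p+1,\bullet})$, or reducing to the case $p=0$ by intersecting with the hyperplane---without carrying either out. Neither is obviously viable as stated: the relative groups $\pi_*(C_{p+1,d},T_{p+1,d})$ are precisely what your own objection says are uncontrolled in the range $k\le 2d$, and composing with the hyperplane-intersection map discards the inductive structure in $p$. As submitted, the proposition is not proved.

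For comparison, the paper closes this step by working with the particular class $\alpha=[f]$ assumed to satisfy $(F_{0D})_*\alpha=0$ rather than by proving injectivity of $(F_D)_*$ on all of $\pi_k(C_{p+1,d}(\P^{n+1}))$: from $[F_{0D}\circ f]=0$ it deduces $[F_D\circ f]=0$ in $\pi_k(T_{p+1,de}(\P^{n+1}))$, applies $\Sigma^{-1}_*$ and the inductive injectivity of $(\Phi_{p,d,n,e})_*$, and asserts that $f$ can be lifted to a null-homotopic $g\colon S^k\to C_{p,d}(\P^{n})$ with $[f]=[\Sigma\circ g]$, whence $\alpha=0$. You may observe that this lifting assertion sits exactly at the point your objection targets, so your diagnosis of where the argument is delicate is accurate; but diagnosing the obstacle is not the same as overcoming it, and your proposal supplies no completed substitute for that step.
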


\begin{proof}
 We prove it by induction. The case that $p=-1$ follows from Proposition \ref{lemma1.4}. We assume that
$\Phi_{p,d,n,e}: C_{p,d}(\P^{n})\to C_{p,de}(\P^{n+1})$ defined by $\Phi_{p,d,n,e}(c)=e\cdot c$ induces injections
$(\Phi_{p,d,n,e})_*: \pi_k(C_{p,d}(\P^{n}))\to \pi_k(C_{p,de}(\P^{n}))$ for $k\leq 2d$ and $e\geq e_{p,d,n}$.

Let $\alpha\in \pi_k(C_{p+1,d}(\P^{n+1}))$ be an element such that  $(\Phi_{p+1,d,n+1,e})_*(\alpha)=0$, that is, $(F_{0D})_*(\alpha)=0$.
Let $f:S^k\to C_{p+1,d}(\P^{n+1})$ be piecewise linear up to homotopy such that $[f]=\alpha$.
By assumption, $[F_{0D}\circ f]=0$.

By Lemma \ref{lemma1.6} and Equation (\ref{eq2}), we have
$$
\xymatrix{S^k\ar[dd]^{||}\ar[r]^-{g}&C_{p,d}(\P^{n})\ar[r]^{\Phi_{p,d,n,e}}\ar[d]^{\Sigma}& C_{p,de}(\P^{n})\ar[d]^{\Sigma}\\
&T_{p+1,d}(\P^{n+1})\ar[r]\ar@{^(->}@{^(->}[d]& T_{p+1,de}(\P^{n+1})\ar@{^(->}[d]\\
S^k\ar[r]^-{f}&C_{p+1,d}(\P^{n+1})\ar[r]^-{F_{tD}}\ar[ur]^{F_D}& C_{p+1,de}(\P^{n+1}).
}
$$
Since $F_{0D}$ is homotopy to $F_{D}:C_{p+1,d}(\P^{n+1}))\to T_{p+1,de}(\P^{n+1}))$
for $e\geq e_{p+1,d,n}$ (cf. \cite{Lawson1}), we have
$[F_D\circ f]=0\in \pi_k(T_{p+1,de}(\P^{n+1}))$.
By Proposition \ref{prop1.2}, $\Sigma^{-1}_*([F_D\circ f])=0$.
From the above commutative diagram and the injectivity of   $(\Phi_{p,d,n,e})_*$,
the map $f:S^k\to C_{p+1,d}(\P^{n+1})$ can be lifted to
a null homotopy map $g:S^k\to C_{p,d}(\P^{n})$
such that $(\Phi_{p,d,n,e})_*([g])=\Sigma^{-1}_*([F_D\circ f])$
and $[f]=[\Sigma\circ g]$. Hence $\alpha=[f]=0$. That is,
$(F_{0D})_*=(\Phi_{p+1,d,n+1,e})_*$ is injective  for $k\leq 2d$.
\end{proof}

\begin{proof}[The proof of Theorem \ref{Th2}]
 The case that $p=-1$ has been proved in \cite{Dold} and \cite{Milgram}.
By taking limit $e\to \infty$ in Proposition \ref{prop1.7},
we get injections $\pi_k(C_{p+1,d}(\P^{n+1}))\to \pi_k(\mC_{p+1}(\P^{n+1}))$ for $k\leq 2d$.
 On one hand, from the fact that $\pi_k(\mC_{p+1}(\P^{n+1}))$ is isomorphic to
 either $\Z$ or $0$ (cf. \cite{Lawson1}) and the injections above,
 we obtain $\pi_k(C_{p+1,d}(\P^{n+1}))$ is isomorphic to either  $\Z$ or $0$ for $k\leq 2d$.
 On the other hand, the induced map $i_*:\pi_k(C_{p+1,d}(\P^{n+1}))\to \pi_k(\mC_{p+1}(\P^{n+1}))$
 by equation (\ref{eqn1}) is surjective for $k\leq 2d$ (cf. \cite{Lawson1}, Theorem 2). Hence
 $\pi_k(C_{p+1,d}(\P^{n+1}))$  is isomorphic to $\pi_k(\mC_{p+1}(\P^{n+1}))$ for $k\leq 2d$.

 From the fact that the inclusion map
$i:C_{p,d}(\P^{n})\subset \mC_{p}(\P^{n})$
in equation (\ref{eqn1}) induces surjections
$i_*: \pi_k(C_{p,d}(\P^{n}))\to \pi_k(C_{p}(\P^{n}))$ for $k\leq 2d$ (cf. \cite{Lawson1}) and
$$\pi_k(C_{p,d}(\P^{n}))\cong \pi_k(C_{p}(\P^{n}))\cong
\left\{
\begin{array}{lll}
\Z, & \hbox{for $0<k\leq \min\{2d,2(n-p)\}$}\\
0, &\hbox{all other $k$ such that $k<2d$,}
\end{array}
\right.$$
we obtain also the injectivity of $i_*$ for $k\leq 2d$ since a surjective homomorphism
to from $\Z$ to $\Z$ is an isomorphism.
This completes the proof of Theorem \ref{Th2}.
\end{proof}

As applications of Theorem \ref{Th2} and Lawson's Complex Suspension Theorem \cite{Lawson1},
 we get the homotopy and homology groups of $C_{p,d}(\P^n)$ up to  $2d$.
\begin{corollary} The first $2d+1$ homotopy groups of $C_{p,d}(\P^n)$ is given by the formula
$$
 \pi_k(C_{p,d}(\P^n))\cong\left\{
\begin{array}{lll}
 \Z, &\hbox{if $k\leq min\{2d,2(n-p)\}$ and even,}\\
 0, &\hbox{all other $k<2d$.}
\end{array}
\right.$$
\end{corollary}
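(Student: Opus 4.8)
The plan is to deduce the corollary directly from Theorem \ref{Th2}, Lawson's Complex Suspension Theorem, and the Dold--Thom theorem, so that the only real work is identifying the homotopy type of the limit space $\mC_p(\P^n)$. First I would invoke Theorem \ref{Th2}: the inclusion $i:C_{p,d}(\P^n)\hookrightarrow \mC_p(\P^n)$ is $2d$-connected, so the induced homomorphism $i_*:\pi_k(C_{p,d}(\P^n))\to \pi_k(\mC_p(\P^n))$ is an isomorphism for every $k\leq 2d$. Hence it suffices to compute $\pi_k(\mC_p(\P^n))$ in the range $k\leq 2d$, and these groups are independent of $d$.

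Next I would reduce to the case of $0$-cycles. Lawson's Complex Suspension Theorem asserts that the complex suspension $\Sigma$ is a homotopy equivalence from the space of $p$-cycles on $\P^n$ to the space of $(p+1)$-cycles on $\P^{n+1}$; passing to the limit over the degree yields homotopy equivalences
$$\mC_p(\P^n)\simeq \mC_{p-1}(\P^{n-1})\simeq \cdots \simeq \mC_0(\P^{n-p}).$$
Iterating $p$ times replaces the computation of $\pi_k(\mC_p(\P^n))$ by that of $\pi_k(\mC_0(\P^{n-p}))$.

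To finish, I would identify $\mC_0(\P^{n-p})$ with the infinite symmetric product $\sp^\infty(\P^{n-p})=\lim_{d}\sp^d(\P^{n-p})$ and apply the Dold--Thom theorem, which gives $\pi_k(\sp^\infty(\P^{n-p}))\cong \widetilde{H}_k(\P^{n-p};\Z)$. Since $\widetilde{H}_k(\P^{m};\Z)$ equals $\Z$ when $k$ is even and $0<k\leq 2m$, and is $0$ otherwise, we obtain $\pi_k(\mC_p(\P^n))\cong \Z$ for $k$ even with $0<k\leq 2(n-p)$ and $0$ for all other $k$. Combining this with the isomorphism from Theorem \ref{Th2}, valid for $k\leq 2d$, produces the asserted values: $\pi_k(C_{p,d}(\P^n))\cong\Z$ exactly when $k$ is even and $0<k\leq \min\{2d,2(n-p)\}$, and $\pi_k(C_{p,d}(\P^n))=0$ for all remaining $k<2d$, which lists the first $2d+1$ homotopy groups.

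The argument is essentially formal once Theorem \ref{Th2} is in hand, so I do not expect a serious obstacle; the only delicate point is the bookkeeping at the two endpoints of the range. Specifically, one must check that the isomorphism furnished by Theorem \ref{Th2} still applies at and beyond $k=2(n-p)$ when $2(n-p)<2d$, so that the correct copy of $\Z$ is recorded there while the groups vanish for $2(n-p)<k\leq 2d$, and that the $k=0$ term is handled by reduced homology, consistently with the path-connectedness of each $C_{p,d}(\P^n)$ and of $\mC_p(\P^n)$. These are exactly the places where the term $\min\{2d,2(n-p)\}$ and the parity restriction enter, and where I would be most careful.
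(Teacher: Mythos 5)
Your proposal is correct and follows essentially the same route as the paper: both reduce the computation to $\pi_k(\mC_p(\P^n))$ via the $2d$-connectedness of the inclusion (Theorem \ref{Th2}) and then quote Lawson's identification of the homotopy type of $\mC_p(\P^n)$. The only cosmetic difference is that you unpack that identification (suspension theorem plus Dold--Thom applied to $\mC_0(\P^{n-p})\cong \sp^\infty(\P^{n-p})$), whereas the paper cites the equivalent statement $\mC_p(\P^n)\simeq K(\Z,2)\times\cdots\times K(\Z,2(n-p))$ directly.
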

\begin{proof}
From the proof to Theorem \ref{Th2}, we have $\pi_k(C_{p,d}(\P^{n}))\to \pi_k(\mC_{p}(\P^{n}))$ for $k\leq 2d$.
Recall the fact that
$\mC_{p}(\P^n)$ is homotopy equivalent to the product $K(\Z, 2)\times\cdots\times K(\Z,2(n-p))$ (cf. \cite{Lawson1}),
in particular, $$\pi_k(\mC_{p}(\P^n))\cong\left\{
\begin{array}{lll}
 \Z,& \hbox{ if $k\leq 2(n-p)$ and even,}\\
 0,&\hbox{otherwise.}
\end{array}
\right.$$
\end{proof}

\begin{corollary}
The first $2d+1$ homology groups of $C_{p,d}(\P^n)$ is given by the formula
$$H_k(C_{p,d}(\P^{n}))\cong  H_k(K(\Z, 2)\times\cdots\times K(\Z,2(n-p)))$$ for $0\leq k\leq 2d$
where the right hand side can be computed by using the K\"{u}nneth formula.
\end{corollary}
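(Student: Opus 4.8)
The plan is to deduce the homology isomorphism from the homotopy isomorphism furnished by Theorem \ref{Th2}, passing through the homotopy fiber and exploiting a parity feature of $\mC_p(\P^n)$. First I would record precisely what has already been proved: the inclusion $i:C_{p,d}(\P^n)\hookrightarrow \mC_p(\P^n)$ induces isomorphisms $i_*:\pi_k(C_{p,d}(\P^n))\xrightarrow{\cong}\pi_k(\mC_p(\P^n))$ for all $k\le 2d$ (this is exactly the content extracted in the proof of Theorem \ref{Th2}), and that both spaces are simply connected, since the preceding Corollary gives $\pi_1(C_{p,d}(\P^n))=0$ and $\mC_p(\P^n)\simeq K(\Z,2)\times\cdots\times K(\Z,2(n-p))$ is a product of simply connected factors.

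Let $F$ be the homotopy fiber of $i$, so that we have a fibration $F\to C_{p,d}(\P^n)\xrightarrow{i}\mC_p(\P^n)$ with simply connected base. Chasing the long exact homotopy sequence, the isomorphisms $i_*$ in degrees $\le 2d$ force $\pi_j(F)=0$ for $j\le 2d-1$; this is the usual statement that a map which is an isomorphism on $\pi_k$ up through degree $2d$ has $(2d-1)$-connected fiber.

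The step requiring care, which I expect to be the main obstacle, is upgrading this to $\pi_{2d}(F)=0$: the generic $(2d-1)$-connected-fiber argument only yields that $i_*$ is surjective, not injective, on $H_{2d}$, whereas the Corollary asserts an isomorphism all the way to $k=2d$. Here I would read off the relevant segment $\pi_{2d+1}(\mC_p(\P^n))\to\pi_{2d}(F)\to\pi_{2d}(C_{p,d}(\P^n))\xrightarrow{\cong}\pi_{2d}(\mC_p(\P^n))$, which shows $\pi_{2d}(F)$ is a quotient of $\pi_{2d+1}(\mC_p(\P^n))$. Since $\mC_p(\P^n)$ is a product of even Eilenberg--MacLane spaces its homotopy is concentrated in even degrees, so $\pi_{2d+1}(\mC_p(\P^n))=0$ and therefore $\pi_{2d}(F)=0$. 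Thus $F$ is in fact $2d$-connected.

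Finally I would invoke the Hurewicz theorem and the Serre spectral sequence. Since $F$ is $2d$-connected we get $H_t(F)=0$ for $1\le t\le 2d$, so in the homology spectral sequence $E^2_{s,t}=H_s(\mC_p(\P^n);H_t(F))\Rightarrow H_{s+t}(C_{p,d}(\P^n))$ every entry of total degree $k\le 2d$ off the line $t=0$ already vanishes on the $E^2$ page, and the surviving entries on $t=0$ admit no nonzero differentials in this range. Hence the edge homomorphism $i_*:H_k(C_{p,d}(\P^n))\to H_k(\mC_p(\P^n))$ is an isomorphism for all $k\le 2d$. Combining this with $\mC_p(\P^n)\simeq\prod_{j=1}^{n-p}K(\Z,2j)$ identifies the right-hand side with $H_k\big(\prod_{j=1}^{n-p}K(\Z,2j)\big)$, which the Künneth formula evaluates, giving the stated formula.
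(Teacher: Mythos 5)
Your argument is correct and takes essentially the same route as the paper's one-line proof: transfer the homotopy isomorphisms established in the proof of Theorem \ref{Th2} to homology and then use $\mC_{p}(\P^n)\simeq K(\Z,2)\times\cdots\times K(\Z,2(n-p))$ together with K\"unneth. You simply make explicit the details the paper leaves implicit, and your observation that $\pi_{2d+1}(\mC_{p}(\P^n))=0$ (homotopy concentrated in even degrees) is exactly what upgrades the conclusion from $k<2d$ to $k\le 2d$.
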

\begin{proof} It also follows from the proof to Theorem \ref{Th2}
and the fact that $\mC_{p}(\P^n)$ is homotopy equivalent to
the product $K(\Z, 2)\times\cdots\times K(\Z,2(n-p))$.
\end{proof}

\section{Etale homotopy for Chow varieties over algebraically closed fields}
In this section, we will compute the Etale homotopy groups
of Chow varieties
over algebraically closed fields. Let $\P^n_K$ be the projective
space of dimension
$n$ over $K$, where $K$ is an algebraic closed field of
characteristic $char(K)$.
Let $l$ be a prime number which is different from
 $char(K)$. Let $C_{p,d}(\P^n)_K$ be
the space of effective $p$-cycles of degree $d$ in $\P^n_K$.

The notations we use in this section can be found in \cite{Friedlander1}.
Recall that the etale topological type functor $(-)_{et}$ is a functor from
simplicial schemes to pro-simplicial sets; the Bousfield-Kan
$(\Z/l)$-completion functor $(\Z/l)_{\infty}$ maps simplicial
sets to simplicial sets; the Bousfield-Kan homotopy inverse limit
functor $holim(-)$ maps indexed families of simplicial sets to
 simplicial sets;
and the geometric realization functor $Re(-)$ maps simplicial
sets to topological spaces.

\begin{definition}
Let $|(-)_{et}|:(\hbox{algebraic sets})\to (\hbox{topological spaces})$
be the functor as the composition
$Re(-)\circ holim(-)\circ(\Z/l)_{\infty}\circ(-)_{et}$.
The $k$-th etale homotopy group of $X_K$
is defined to be $\pi_k(|(X_K)_{et}|)$.
\end{definition}

Let $l_0\subset \P^n_K$ be a fixed $p$-dimensional linear subspace.
For each $d\geq 1$, we consider the closed immersions
$$
\tilde{i}:C_{p,d}(\P^n_K)\hookrightarrow C_{p,d+1}(\P^n_K).
$$
defined by $c\mapsto c+l_0$.  These immersions induce topological embeddings
\begin{equation}\label{eqn4.1}
\tilde{i}:|(C_{p,d}(\P^n)_K)_{et}|\hookrightarrow |(C_{p,d+1}(\P^n)_K)_{et}|
\end{equation}
(cf. \cite{Friedlander1}, Prop. 2.1).

From this sequence of embeddings we can form the union
$$
|\mC_{p}(\P^n_K)_{et}|:=\lim_{d\to\infty} |(C_{p,d}(\P^n)_K)_{et}|.
$$

The topology on $|\mC_{p}(\P^n_K)_{et}|$ is the weak topology for
$\{ |(C_{p,d}(\P^n)_K)_{et}|\}_{d=1}^{\infty}$. For more general discuss
on etale homotopy on spaces of algebraic cycles, the reader
is referred to the paper \cite{Friedlander1}.

Our second main result is  the following theorem.

\begin{theorem}\label{Th4.1}
For all $n,p$ and $d$, the  inclusion
$\tilde{i}:|(C_{p,d}(\P^n)_K)_{et}|\hookrightarrow |(\mC_{p}(\P^n)_K)_{et}|$
induced by Equation (\ref{eqn4.1})
is $2d$-connected.
\end{theorem}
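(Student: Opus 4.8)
The plan is to mirror the proof of Theorem \ref{Th2} step by step, replacing every topological space by its etale topological type $|(-)_{et}|$ and every homotopy-theoretic input from \cite{Lawson1} by its etale analogue from \cite{Friedlander1}. The key observation is that the entire argument for Theorem \ref{Th2} is formal once one has three ingredients: (1) an etale version of the Complex Suspension Theorem, identifying $\pi_k(|(T_{p+1,d}(\P^{n+1})_K)_{et}|)$ with $\pi_k(|(C_{p,d}(\P^n)_K)_{et}|)$; (2) surjectivity of the induced map $\tilde{i}_*$ through dimension $2d$; and (3) the computation of $\pi_k(|(\mC_p(\P^n)_K)_{et}|)$. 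All three are available over $K$ through the work of Friedlander, since the etale topological type functor carries the relevant algebraic-geometric constructions (the Zariski-open locus $T_{p+1,d}$, the family of transformations $F_{tD}$, and the codimension estimate on $B_c$) faithfully into the homotopy category of pro-simplicial sets.

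First I would establish the etale analogue of Proposition \ref{lemma1.4}, the base case $p=-1$. Here $C_{0,d}(\P^n)_K\cong \sp^d(\P^n_K)$ as algebraic sets, and the Dold-Thom theorem for symmetric products has its etale counterpart: for $k\leq 2d$ the stabilization map $j_*:\pi_k(|(\sp^d(\P^n)_K)_{et}|)\to \pi_k(|(\sp^\infty(\P^n)_K)_{et}|)$ is an isomorphism, and the limit computes the $l$-adic homology $H_k(\P^n_K;\Z_l)$. The diagonal-plus-projection splitting argument giving injectivity of $\phi_{e*}$ goes through verbatim on $l$-adic homology groups, since the symmetric-group-invariance identification $H_k((\P^n)^{de})^{S_{de}}\cong H_k(\sp^{de}(\P^n))$ holds with $\Q_l$ coefficients just as it does with $\Q$. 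The essential finite-dimensionality and the $\Z$-or-$0$ structure of the homotopy groups persist after $(\Z/l)$-completion.

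Next I would run the induction of Proposition \ref{prop1.7} in the etale setting. The suspension map $\Sigma$, the transformation $F_{tD}$, and the factorization through $T_{p+1,d}$ are all induced by morphisms of algebraic sets (or by the algebraic family over the parameter space of divisors $D$), so applying $|(-)_{et}|$ produces the same commutative diagram of pro-simplicial sets. The codimension estimate ${\rm codim}_\C B_c\geq \binom{p+e+1}{e}$ is algebraic and hence valid over $K$; what one needs is that a generic $D$ avoiding $\bigcup_{x\in S^k}B_{f(x)}$ still exists, which follows from the same dimension count applied to the etale homotopy dimension of $S^k$. The inductive lifting of a nullhomotopic $\alpha$ through the suspension isomorphism then yields injectivity of $(\Phi_{p+1,d,n+1,e})_*$ through dimension $2d$. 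Taking $e\to\infty$ gives injectivity into $\pi_k(|(\mC_{p+1}(\P^{n+1})_K)_{et}|)$, and combining with Friedlander's surjectivity statement and the computation that these limit groups are $\Z_l$ or $0$ completes the argument exactly as in the proof of Theorem \ref{Th2}.

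The main obstacle I anticipate is not the formal transport of the diagram-chase but the verification that the geometric inputs survive $(\Z/l)$-completion with the correct connectivity. Specifically, I expect the delicate point to be the etale Dold-Thom isomorphism and the stabilization range $k\leq 2d$ for symmetric products in characteristic $p$: one must confirm that Friedlander's results give the $l$-adic homology of $\sp^d(\P^n_K)$ in the stable range and that the Hurewicz map remains injective after completion, since the classical proofs of Dold-Milgram rely on the analytic topology. Once this characteristic-free Dold-Thom input is secured, the homotopy inverse $F_D\simeq F_{0D}$ and the surjectivity of $\tilde{i}_*$ from \cite{Friedlander1} feed into the same scheme, and no genuinely new geometric estimate is required beyond those already proved over $K$.
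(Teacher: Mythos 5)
Your proposal follows essentially the same route as the paper: an etale analogue of the base case $p=-1$ via the $l$-adic Dold--Thom theorem, the same induction on $p$ through the suspension/divisor-lifting diagram, and the same endgame combining injectivity in the limit $e\to\infty$ with surjectivity of $\tilde{i}_*$ and the computation of $\pi_k(|(\mC_p(\P^n)_K)_{et}|)$. The only point where the paper does slightly more work than you indicate is the surjectivity of $\tilde{i}_*$ through dimension $2d$, which is not quoted directly from \cite{Friedlander1} but derived there from the Algebraic Suspension Theorem together with the $2d$-connectivity of the $p=0$ case.
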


\begin{lemma}\label{lemma4.2}
 For $k\leq 2d$, we have
$$\pi_k(|\sp^d(\P^n_K)_{et}|)=\left\{
\begin{array}{lll}
\Z_l,&\hbox{if $k\leq 2n$ and even,}\\
0, & \hbox{if $k=0$ or $k\geq 2n$ or odd.}
\end{array}
\right.
$$
\end{lemma}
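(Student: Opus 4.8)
The plan is to transport the computation to the classical topology of the symmetric product of complex projective space, using the base-change and comparison properties of the $\Z/l$-completed étale homotopy type. Recall that $C_{0,d}(\P^n_K)\cong \sp^d(\P^n_K)$ and that $\sp^d(\P^n)=(\P^n)^d/S_d$ is a proper variety defined over $\Z$. The first step is to observe that, because the coefficient prime $l$ is invertible on the base, the functor $|(-)_{et}|$ applied to this family is insensitive to the characteristic: the $\Z/l$-completed étale homotopy type of $\sp^d(\P^n_K)$ is weakly equivalent to that of $\sp^d(\P^n_{\C})$. I would extract this from the comparison and base-change theorems of \cite{Friedlander1}, which apply to proper (not necessarily smooth) schemes with $\Z/l$ coefficients; this is exactly the setting handled there for spaces of cycles.

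Over $\C$ the Artin--Mazur comparison theorem then identifies $|(\sp^d(\P^n)_{\C})_{et}|$ with the Bousfield--Kan completion $(\Z/l)_{\infty}\sp^d(\P^n_{\C})$ of the underlying analytic space, so it remains to compute the $l$-completed homotopy groups of the complex symmetric product in the range $k\leq 2d$. Here I would invoke the classical input already used in Proposition \ref{lemma1.4}: the space $\sp^d(\P^n_{\C})$ is simply connected, the stabilization $\sp^d(\P^n_{\C})\hookrightarrow \sp^{\infty}(\P^n_{\C})$ induces isomorphisms on $\pi_k$ for $k\leq 2d$ (\cite{Dold}, \cite{Milgram}), and $\sp^{\infty}(\P^n_{\C})\simeq K(\Z,2)\times\cdots\times K(\Z,2n)$ by the Dold--Thom theorem (\cite{Dold-Thom}). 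Consequently $\pi_k(\sp^d(\P^n_{\C}))\cong \Z$ for even $k$ with $2\leq k\leq \min\{2d,2n\}$ and vanishes for the remaining $k\leq 2d$.

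Finally I would complete. Since $l$-completion preserves the connectivity of a map between simply connected spaces of finite type, the completed stabilization map still induces isomorphisms on $\pi_k$ for $k\leq 2d$, so in this range it suffices to complete the product $\prod_{i=1}^{n}K(\Z,2i)$ factor by factor. The Bousfield--Kan short exact sequence $0\to \mathrm{Ext}(\Z/l^{\infty},\pi_kX)\to \pi_k((\Z/l)_{\infty}X)\to \mathrm{Hom}(\Z/l^{\infty},\pi_{k-1}X)\to 0$, together with $\mathrm{Ext}(\Z/l^{\infty},\Z)\cong \Z_l$ and $\mathrm{Hom}(\Z/l^{\infty},\Z)=0$, gives $(\Z/l)_{\infty}K(\Z,2i)\simeq K(\Z_l,2i)$. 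This replaces each $\Z$ by $\Z_l$ and yields precisely the asserted groups.

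The step I expect to be the main obstacle is the first one, the characteristic-independence of the $\Z/l$-completed étale homotopy type. Because $\sp^d(\P^n)$ is singular for $n\geq 2$, one cannot appeal to \emph{smooth} proper base change and the lisse-ness of the higher direct images; instead one must rely on the general comparison and rigidity statements for étale homotopy types of proper schemes over $\Z$ (with $l$ invertible) in \cite{Friedlander1}, and verify that the stabilization and Dold--Thom descriptions remain compatible with the completion throughout the stable range $k\leq 2d$. Once this transfer is secured, the remaining steps are routine.
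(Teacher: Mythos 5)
Your route is different from the paper's and, as you yourself anticipate, its weight rests entirely on the first step, which as stated is not adequately justified. You propose to prove characteristic--independence of $|\sp^d(\P^n_K)_{et}|$ by appealing to ``comparison and base-change theorems of \cite{Friedlander1} which apply to proper (not necessarily smooth) schemes.'' There is no such general theorem to quote: for proper but \emph{singular} schemes the specialization map on $\Z/l$-\'etale cohomology need not be an isomorphism, and smooth proper base change is unavailable because $\sp^d(\P^n)$ is singular for $n\geq 2$, $d\geq 2$. What \cite{Friedlander1} actually provides is the specific structural result (Theorem 4.3 there) that $|\sp^d(\P^n_K)_{et}|$ is homotopy equivalent to $|\sp^d((\P^n_K)_{et})|$, i.e.\ that the symmetric product can be commuted past the \'etale topological type functor. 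This is precisely how the paper sidesteps your obstacle: once $\sp^d$ is applied \emph{after} $(-)_{et}$, the only scheme whose \'etale type one needs is the smooth proper $\P^n_K$, and no base change on a singular variety ever occurs. So your step 1 is a genuine gap as written, though it is repairable by replacing the invoked ``base-change theorem'' with Friedlander's Theorem 4.3 --- at which point your argument essentially collapses into the paper's.

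For comparison, the paper's proof runs: (i) $|\sp^d(\P^n_K)_{et}|\simeq|\sp^d((\P^n_K)_{et})|$ is simply connected because $|(\P^n_K)_{et}|$ is; (ii) the stabilization $|\sp^d(\P^n_K)_{et}|\to|\sp^{\infty}(\P^n_K)_{et}|$ is homologically $2d$-connected by Milgram's theorem applied to the simplicial-set--level symmetric products, hence $2d$-connected on homotopy groups by simple connectivity; (iii) Friedlander's Corollary 4.4 computes $\pi_k(|\sp^{\infty}(\P^n_K)_{et}|)$. Your remaining steps (Dold--Milgram stability over $\C$, Dold--Thom, and the Bousfield--Kan $\mathrm{Ext}/\mathrm{Hom}$ sequence giving $(\Z/l)_{\infty}K(\Z,2i)\simeq K(\Z_l,2i)$) are all correct and would be fine if the transfer to $\C$ were secured, but they do more work than necessary once Friedlander's symmetric-product theorem is in hand.
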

\begin{proof}
We need to show that $\pi_k(|\sp^d(\P^n_K)_{et}|)\cong H_k(\P^n_K,\Z_l)$
for $0<k\leq 2d$, where $H_k(X,\Z_l)$
is the $l$-adic homology group of $X$.
First we have $|\sp^d(\P^n_K)_{et}|$ is simply
connected for any integer
$d\geq 1$. To see this, note that $|\sp^d(\P^n_K)_{et}|$ is
homotopy equivalent to $|\sp^d((\P^n_K)_{et})|$
(cf. the proof of Theorem 4.3 in \cite{Friedlander1}) and the
latter is simply connected since $|(\P^n_K)_{et}|$ is.
Since the inclusion map
 $\tilde{i}:|\sp^d(\P^n_K)_{et}|\to|\sp^{\infty}(\P^n_K)_{et}|$
is homologically $2d$-connected (cf. \cite{Milgram})
and $\pi_1(|\sp^d(\P^n_K)_{et}|)=0$,
we obtain the $2d$-connectivity of the inclusion map $i$. That is,
$\pi_k(|\sp^d(\P^n_K)_{et}|)\cong \pi_k(|\sp^{\infty}(\P^n_K)_{et}|).$
Now the theorem follows from the fact
(cf. \cite{Friedlander1}, Corollary 4.4) that
$$\pi_k(|\sp^{\infty}(\P^n_K)_{et}|)=\left\{
\begin{array}{lll}
\Z_l,&\hbox{if $0<k\leq 2n$ and even,}\\
0, & \hbox{otherwise.}
\end{array}
\right.
$$
\end{proof}

\begin{lemma}\label{lemma4.3}
For any integer $e\geq 1$, the  map
$$\tilde{\phi}_e: C_{0,d}(\P^{n})_K\to C_{0,de}(\P^{n})_K, ~ \tilde{\phi}_e(c)=e\cdot c$$
 induces injections
$\tilde{\phi}_{e*}: \pi_k(|(C_{0,d}(\P^{n})_K)_{et}|)\to \pi_k(|C_{0,de}((\P^{n})_K)_{et}|)$ for $k\leq 2d$.
\end{lemma}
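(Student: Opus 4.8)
The plan is to mirror the proof of Proposition \ref{lemma1.4} (the complex case) step by step, replacing ordinary singular homology with $l$-adic homology and ordinary homotopy groups with etale homotopy groups. The key structural input is again the identification $C_{0,d}(\P^n)_K\cong\sp^d(\P^n_K)$, so that $\tilde\phi_e$ becomes the map $\sp^d(\P^n_K)\to\sp^{de}(\P^n_K)$ induced on etale homotopy types by the diagonal-then-project composite. First I would set up the diagonal map $\Delta:(\P^n_K)^d\to(\P^n_K)^{de}$ and the first projection $p_1$, note $p_1\circ\Delta=\mathrm{id}$, and conclude that $\Delta_*$ is a split injection on $l$-adic homology $H_k(-,\Z_l)$. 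Because the etale homotopy type functor carries products of projective spaces to the expected products (the etale homotopy type of $(\P^n_K)^m$ is a product, and $l$-adic homology satisfies a K\"unneth formula), this part is formal.

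The heart of the argument is the injectivity of $\tilde\phi'_{e*}$ on $l$-adic homology. Here I would exploit the identification $H_k((\P^n_K)^d,\Z_l)^{S_d}\cong H_k(\sp^d(\P^n_K),\Z_l)$ of the $S_d$-invariants with the homology of the symmetric product; this is the $l$-adic analogue of the statement used in the complex case (cf.\ \cite{Elizondo-Srinivas}), and it holds because $l\neq\mathrm{char}(K)$ so the averaging/transfer argument applies with $\Z_l$-coefficients (rationally, or after inverting the order of $S_d$, the invariants compute the homology of the quotient). Given this, the diagram chase is identical to the complex case: lift $\alpha$ to an $S_d$-invariant $\tilde\alpha$, push forward by $\Delta_*$ to an $S_{de}$-invariant class $\tilde\beta$, use the commutative square relating $\Delta_*$, $\tilde\phi'_{e*}$ and the two projections $\pi_*$ to see $\pi_*(\tilde\beta)=0$, deduce $\tilde\beta=0$ from the invariant-equals-quotient-homology identification, and finally $\tilde\alpha=0$ by injectivity of $\Delta_*$, hence $\alpha=0$.

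To pass from homology to etale homotopy I would invoke Lemma \ref{lemma4.2}, which computes $\pi_k(|\sp^d(\P^n_K)_{et}|)$ to be $\Z_l$ or $0$ in the range $k\leq 2d$, together with the fact that $|\sp^d(\P^n_K)_{et}|$ is simply connected. Since the spaces are simply connected and have homotopy groups concentrated in even degrees agreeing with $l$-adic homology through the stable range, I would run the same Hurewicz-type comparison as in Proposition \ref{lemma1.4}: stabilization $j_*:\pi_k(|\sp^d(\P^n_K)_{et}|)\to\pi_k(|\sp^\infty(\P^n_K)_{et}|)$ is an isomorphism for $k\leq 2d$ (by the homological $2d$-connectivity from \cite{Milgram} plus simple connectivity, exactly as in the proof of Lemma \ref{lemma4.2}), and the Dold--Thom identification on the stable symmetric product realizes the Hurewicz map, giving injectivity of $\rho$. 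Combining injectivity of $\rho$ with injectivity of $\tilde\phi'_{e*}$ on $l$-adic homology forces $\tilde\phi_{e*}$ to be injective on etale homotopy groups in the range $k\leq 2d$.

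The main obstacle I anticipate is verifying the $l$-adic invariants statement $H_k((\P^n_K)^{m},\Z_l)^{S_{m}}\cong H_k(\sp^{m}(\P^n_K),\Z_l)$ in the required generality over a field of arbitrary characteristic: one must check that the transfer argument goes through with $\Z_l$-coefficients (it does because $l\neq\mathrm{char}(K)$, but torsion phenomena in $S_m$ mean the clean isomorphism is safest rationally or after inverting $m!$), and that the etale homotopy type functor $|(-)_{et}|$ genuinely commutes with symmetric products up to homotopy and respects the transfer. Once the $l$-adic symmetric-product homology and its behavior under $\Delta_*$ are pinned down, the rest of the proof is a direct transcription of the complex-analytic argument with $\Z$ replaced by $\Z_l$.
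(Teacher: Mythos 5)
Your proposal matches the paper's proof, which simply transports the argument of Proposition \ref{lemma1.4} word for word to the etale setting, replacing singular homology by $l$-adic homology, $\Z$ and $\Q$ by $\Z_l$ and $\Q_l$, and the Dold--Thom theorem by Friedlander's $l$-adic analogue, after identifying $C_{0,d}(\P^n)_K$ with $\sp^d(\P^n_K)$. Your version is in fact more explicit than the paper's about the one point that needs care (the $S_m$-invariants identification with $\Z_l$-coefficients, which works since $l\neq\mathrm{char}(K)$ and the argument only needs it rationally), so no changes are needed.
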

\begin{proof}
Note that there is a bi-continuous algebraic morphism from $C_{0,d}(\P^n)_K$
 to $\sp^d(\P^n_K)$(cf. \cite{Friedlander1}) , we need to show
$\tilde{\phi}_{e*}:\pi_k(|(\sp^{d}(\P^{n})_K)_{et}|)\to\pi_k(|\sp^{de}((\P^{n})_K)_{et}|)$ is for $k\leq 2d$. Now the proof is word for word from Proposition \ref{lemma1.4},
except that the homotopy groups(resp. singular homology groups) are
replaced by the etale homotopy groups (resp. $l$-adic homology groups), $\Z$ (resp. $\Q$) are replaced by $\Z_l$ (resp. $\Q_l$) and the Dold-Thom theorem is replaced by the $l$-adic analogous version proved by Friedlander \cite{Friedlander1}.
\end{proof}

\begin{lemma}\label{lemma4.4}
For integers $p,d,n$, there is an integer $\tilde{e}_{p,d,n}\geq 1$ such that if $e\geq \tilde{e}_{p,d,n}$,
then the map $\widetilde{\Phi}_{p+1,d,n+1,e}:C_{p+1,d}(\P^{n})_K\to C_{p+1,de}(\P^{n+1})_K$ given by
$c\mapsto e\cdot c$ induces injections
$$(\widetilde{\Phi}_{p+1,d,n+1,e})_*: \pi_k(|(C_{p+1,d}(\P^{n+1})_K)_{et}|)\to \pi_k(|(C_{p+1,de}(\P^{n+1})_K)_{et}|)$$ for $k\leq 2d$.
\end{lemma}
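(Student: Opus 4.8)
The plan is to argue by induction on $p$, paralleling the proof of Proposition \ref{prop1.7}, with every topological ingredient replaced by its étale counterpart. The base case $p=-1$ (that is, $p+1=0$) is precisely Lemma \ref{lemma4.3}. For the inductive step I would assume that $(\widetilde{\Phi}_{p,d,n,e})_*$ is injective on $\pi_k(|(C_{p,d}(\P^n)_K)_{et}|)$ for $k\le 2d$ and $e\ge \tilde{e}_{p,d,n}$, and deduce the same for $p+1$. The dictionary of replacements is the standard one already used in Lemma \ref{lemma4.3}: singular homotopy and homology groups become étale homotopy groups and $l$-adic homology groups, $\Z$ and $\Q$ become $\Z_l$ and $\Q_l$, and Proposition \ref{prop1.2} (the complex suspension theorem) is replaced by Friedlander's étale suspension theorem (\cite{Friedlander1}), which gives $\pi_k(|(T_{p+1,d}(\P^{n+1})_K)_{et}|)\cong \pi_k(|(C_{p,d}(\P^n)_K)_{et}|)$ via the algebraic suspension $\Sigma$.

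The heart of the step is the étale analogue of the commutative diagram built from Lemma \ref{lemma1.6} and Equation (\ref{eq2}). Since $\Sigma$, $\widetilde{\Phi}$ and $F_{tD}$ are all algebraic morphisms, applying $|(-)_{et}|$ produces the corresponding commutative diagram of étale homotopy types, and the two vertical suspension maps become isomorphisms on $\pi_k$ for $k\le 2d$ by Friedlander's theorem. The one place where the complex argument genuinely uses the topology of the ground field is the homotopy $F_{0D}\simeq F_D$: over $\C$ this came from the real family $\{F_{tD}\}_{0\le t\le 1}$. Over $K$ I would instead observe that scalar multiplication by $t$ in the line bundle $p_0$ makes $F_{tD}$ a single \emph{algebraic} homotopy, i.e.\ a morphism $C_{p+1,d}(\P^{n+1})_K\times\A^1\to C_{p+1,de}(\P^{n+1})_K$ with $F_{0D}=\widetilde{\Phi}_{p+1,d,n+1,e}$ and $F_{1D}=F_D$ (for $D$ chosen so that $x_1\notin tD$ for all $t$). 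Because $|(\A^1_K)_{et}|$ is contractible after $(\Z/l)$-completion, the two sections $t=0$ and $t=1$ of the projection $X\times\A^1\to X$ induce the same map on completed étale types, so $(\widetilde{\Phi}_{p+1,d,n+1,e})_*=(F_D)_*$ for \emph{any} admissible $D$. It therefore suffices to exhibit one $D$ for which $(F_D)_*$ is injective in the range $k\le 2d$.

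The main obstacle is transporting the general-position choice of $D$ to the étale setting. Over $\C$ one chose $D$ outside the swept-out locus $\bigcup_{x\in S^k}B_{f(x)}$, a real-codimension count against the sphere; over $K$ there is no sphere to sweep. I would dualize: for fixed $D$ set $Z_D:=\{c\in C_{p+1,d}(\P^{n+1})_K\mid D\in B_c\}$, the cycles on which $F_D$ fails to land in $T_{p+1,de}(\P^{n+1})_K$. From the incidence correspondence $\{(c,D)\mid D\in B_c\}$ together with the estimate $\mathrm{codim}_K B_c\ge \binom{p+e+1}{e}$ (which is purely geometric, hence valid over any algebraically closed field), a generic $D$ has $\mathrm{codim}_K Z_D\ge \binom{p+e+1}{e}$. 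Choosing $e\ge \tilde{e}_{p+1,d,n}$ large enough that $2\binom{p+e+1}{e}-2\ge 2d$, the closed set $Z_D$ has codimension exceeding $d$, and I claim the inclusion $U:=C_{p+1,d}(\P^{n+1})_K\setminus Z_D\hookrightarrow C_{p+1,d}(\P^{n+1})_K$ is $2d$-connected on étale homotopy types. This is the $l$-adic analogue (via cohomological purity away from $\mathrm{char}(K)$) of the fact that deleting a high-codimension closed subvariety does not affect the low-dimensional homotopy, and it is exactly what replaces the complex transversality argument. I expect this connectivity statement to be the step requiring the most care, precisely because the Chow variety is singular, so purity must be invoked in a form valid for singular $U$ and interacts with the $(\Z/l)$-completion.

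Granting this, on $U$ the morphism $F_D$ factors through $T_{p+1,de}(\P^{n+1})_K$, and the étale version of the diagram from Lemma \ref{lemma1.6} applies: a class $\alpha$ with $(\widetilde{\Phi}_{p+1,d,n+1,e})_*(\alpha)=0$ is carried, after restriction to $U$ and through the suspension isomorphism, to a class $[g]\in\pi_k(|(C_{p,d}(\P^n)_K)_{et}|)$ with $(\widetilde{\Phi}_{p,d,n,e})_*[g]=0$; the inductive hypothesis forces $[g]=0$, and hence $\alpha=0$. The only genuinely new inputs beyond the complex case are the high-codimension connectivity of $U$ and the verification that $B_c$ and its incidence correspondence behave as over $\C$; once these are in place the remaining steps are formal consequences of the functoriality of $|(-)_{et}|$, the $\A^1$-homotopy invariance after $(\Z/l)$-completion, and Friedlander's étale suspension theorem.
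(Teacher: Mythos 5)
Your inductive skeleton --- base case Lemma \ref{lemma4.3}, the suspension isomorphism from Friedlander's algebraic suspension theorem, and the diagram chase that forces $[g]=0$ and hence $\alpha=0$ --- is exactly the paper's proof, and your observation that $F_{0D}\simeq F_D$ should come from an algebraic $\A^1$-homotopy together with $\A^1$-invariance of the $(\Z/l)$-completed \'etale type is the right replacement for the real family $\{F_{tD}\}_{0\le t\le 1}$. Where you diverge is that the paper does not re-derive the key diagram at all: it cites Proposition 3.5 of \cite{Friedlander1} for the existence of the factorization $F_D$ through $|(T_{p+1,de}(\P^{n+1})_K)_{et}|$ up to homotopy, and otherwise transcribes the proof of Proposition \ref{prop1.7} verbatim (including the now ill-fitting phrase ``piecewise linear'' for a map into $|(-)_{et}|$). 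You instead try to prove that input from scratch, and that is where the gap appears.

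The gap is in your replacement for the transversality step. You fix a generic $D$, set $Z_D=\{c : D\in B_c\}$, and claim that $U=C_{p+1,d}(\P^{n+1})_K\setminus Z_D\hookrightarrow C_{p+1,d}(\P^{n+1})_K$ is $2d$-connected because $\mathrm{codim}\,Z_D$ is large, ``by cohomological purity.'' But purity in the form $H^i_Z(X,\Z/l)=0$ for $i<2\,\mathrm{codim}\,Z$ requires $X$ to be smooth along $Z$, and it genuinely fails for singular $X$: take $X$ the affine cone over a smooth projective curve of positive genus and $Z$ its vertex (codimension $2$); then $X$ is contractible while $X\setminus Z$ is a $\mathbb{G}_m$-bundle over the curve with nontrivial $H^1$. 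Since Chow varieties are singular, the connectivity of $U\hookrightarrow C_{p+1,d}(\P^{n+1})_K$ cannot be extracted from the codimension of $Z_D$ alone, and you yourself flag this step as the delicate one without resolving it. Your dualization also discards exactly the feature that makes the complex argument work: there the genericity is exploited in the \emph{smooth} parameter space $C_{n+1,e}(\P^{n+2})\cong\P^N$ of divisors --- one removes the high-codimension locus $\bigcup_{x\in S^k}B_{f(x)}$ from $\P^N$ and never needs any statement about complements of closed subsets of the singular Chow variety. Friedlander's Proposition 3.5 keeps the genericity on the divisor side in the \'etale setting as well; to close your argument you should either do the same or cite that proposition directly, as the paper does.
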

\begin{proof}
The case that $K=\C$ has been proved in Proposition \ref{prop1.7}. The analogous argument is given below.

Note that the map $\widetilde{\Phi}_{p+1,d,n+1,e}:C_{p+1,d}(\P^{n})_K\to C_{p+1,de}(\P^{n+1})_K$ induced a continuous map (also denote by $\widetilde{\Phi}_{p+1,d,n+1,e}$)
$\widetilde{\Phi}_{p+1,d,n+1,e}:|(C_{p+1,d}(\P^{n+1})_K)_{et}|\to |(C_{p+1,de}(\P^{n+1})_K)_{et}|$ between topological spaces (cf. \cite{Friedlander1}, Prop. 2.1).

We also prove it by induction. The case that $p=-1$ follows from Lemma \ref{lemma4.3}. We assume that
$\Phi_{p,d,n,e}: C_{p,d}(\P^{n})_K\to C_{p,de}(\P^{n+1})_K$ defined by $\Phi_{p,d,n,e}(c)=e\cdot c$ induces injections
$(\Phi_{p,d,n,e})_*: \pi_k(|(C_{p,d}(\P^{n})_K)_{et}|)\to \pi_k(|(C_{p,de}(\P^{n})_K)_{et}|)$ for $k\leq 2d$ and $e\geq \tilde{e}_{p,d,n}$.

Let $\alpha\in \pi_k(|(C_{p+1,d}(\P^{n+1})_K)_{et}|)$ be an element such that  $(\Phi_{p+1,d,n+1,e})_*(\alpha)=0$, that is, $(F_{0D})_*(\alpha)=0$.
Let $f:S^k\to \pi_k(|(C_{p+,d}(\P^{n+1})_K)_{et}|)$ be piecewise linear up to homotopy such that $[f]=\alpha$.
By assumption, $[F_{0D}\circ f]=0$.

By Proposition 3.5 in \cite{Friedlander1} and the algebraic version of Equation (\ref{eq2}), we have
$$
\xymatrix{S^k\ar[dd]^{||}\ar[r]^-{g}&|(C_{p,d}(\P^{n})_K)_{et}|\ar[r]^{\Phi_{p,d,n,e}}\ar[d]^{\Sigma}& |(C_{p,de}(\P^{n})_K)_{et}|\ar[d]^{\Sigma}\\
&|(T_{p+1,d}(\P^{n+1})_K)_{et}|\ar[r]\ar@{^(->}@{^(->}[d]& |(T_{p+1,de}(\P^{n+1})_K)_{et}|\ar@{^(->}[d]\\
S^k\ar[r]^-{f}&|(C_{p+1,d}(\P^{n+1})_K)_{et}|\ar[r]^-{F_{0D}}\ar[ur]^{F_D}& |(C_{p+1,de}(\P^{n+1})_K)_{et}|.
}
$$
Since $F_{0D}$ is homotopy to $F_{D}:|(C_{p+1,d}(\P^{n+1})_K)_{et}|\to |(T_{p+1,de}(\P^{n+1})_K)_{et}|$
for $e\geq \tilde{e}_{p+1,d,n}$ (cf. \cite{Friedlander1}), we have
$[F_D\circ f]=0\in \pi_k(|(T_{p+1,de}(\P^{n+1})_K)_{et}|)$.
By the algebraic version of Proposition \ref{prop1.2} (cf. \cite{Friedlander1}, Prop. 3.2), $\Sigma^{-1}_*([F_D\circ f])=0$.
From the above commutative diagram and the injectivity of   $(\Phi_{p,d,n,e})_*$,
the map $f:S^k\to |(C_{p+1,d}(\P^{n+1})_K)_{et}|$ can be lifted to
a null homotopy map $g:S^k\to |(C_{p,d}(\P^{n})_K)_{et}|$
such that $(\Phi_{p,d,n,e})_*([g])=\Sigma^{-1}_*([F_D\circ f])$
and $[f]=[\Sigma\circ g]$. Hence $\alpha=[f]=0$. That is,
$(F_{0D})_*=(\Phi_{p+1,d,n+1,e})_*$ is injective  for $k\leq 2d$.
\end{proof}

\begin{proof}[The proof of Theorem \ref{Th4.1}]
 The case that $p=-1$ has been proved in \cite{Dold}, \cite{Milgram} and \cite{Friedlander1}.
By taking limit $e\to \infty$ in Lemma \ref{lemma4.4},
we get injections $\pi_k(|(C_{p+1,d}(\P^{n+1})_K)_{et}|)\to |(\mC_{p+1}(\P^{n+1})_K)_{et}|$ for $k\leq 2d$.
 On one hand, from the fact that $\pi_k(|(\mC_{p+1}(\P^{n+1})_K)_{et}|)$ is isomorphic to
 either $\Z_l$ or $0$ (cf. \cite{Friedlander1})  and the injections above,
 we obtain $\pi_k(|(\mC_{p+1}(\P^{n+1})_K)_{et}|)$ is isomorphic to either  $\Z_l$ or $0$ for $k\leq 2d$.
 On the other hand, the induced map $\tilde{i}_*:\pi_k(|(C_{p+1,d}(\P^{n+1})_K)_{et}|)\to \pi_k(|(\mC_{p+1}(\P^{n+1})_K)_{et}|)$
 by equation (\ref{eqn4.1}) is surjective for $k\leq 2d$.
 To see this, consider the commutative diagram
\begin{equation}\label{eqn4.2}
 \xymatrix{|(C_{0,d}(\P^{n})_K)_{et}|\ar[r]^{\Sigma^p}\ar[d]&|(C_{p,d}(\P^{n+p})_K)_{et}|\ar[d]^{\tilde{i}}\\
 |(\mC_{0}(\P^{n})_K)_{et}|\ar[r]^{\Sigma^p}&|(\mC_{p}(\P^{n+p})_K)_{et}|
 }
\end{equation}
 induced by
 $$
\xymatrix{C_{0,d}(\P^{n})_K\ar[r]^{\Sigma^p}\ar[d]&C_{p,d}(\P^{n+p})_K \ar[d]^{\tilde{i}}\\
 \mC_{0}(\P^{n})_K\ar[r]^{\Sigma^p}& \mC_{p}(\P^{n+p})_K
 }
 $$
 where, $\Sigma^p:=\Sigma\circ\Sigma \circ\cdots\circ\Sigma$ is the suspension for $p$ times.
 By the argument in the proof of Lemma \ref{lemma4.2}, we know that the
 left vertical arrow in equation (\ref{eqn4.2}) is a $2d$-connected
 mapping, and by Theorem 4.2 in \cite{Friedlander1} the lower
 horizontal arrow is a homotopy equivalence. This implies the map
 induced by $i$ are surjective on homotopy groups for $k\leq 2d$.
Hence  $\pi_k(|(C_{p+1,d}(\P^{n+1})_K)_{et}|)$  is isomorphic to $\pi_k(|(\mC_{p+1}(\P^{n+1})_K)_{et}|)$ for $k\leq 2d$.

 By this fact that the inclusion map
$\tilde{i}:|(C_{p,d}(\P^{n})_K)_{et}|\subset |(\mC_{p}(\P^{n})_K)_{et}|$
in equation (\ref{eqn4.1}) induces surjections
$\tilde{i}_*: \pi_k(|(C_{p,d}(\P^{n})_K)_{et}|)\to \pi_k(|(\mC_{p}(\P^{n})_K)_{et}|)$ for $k\leq 2d$ and
$$\pi_k(|(C_{p,d}(\P^{n})_K)_{et}|)\cong \pi_k(|(\mC_{p}(\P^{n})_K)_{et}|)\cong
\left\{
\begin{array}{lll}
\Z_l, & \hbox{for $0<k\leq \min\{2d,2(n-p)\}$}\\
0, &\hbox{all other $k<2d$,}
\end{array}
\right.$$
we obtain also the injectivity of $\tilde{i}_*$ for $k\leq 2d$ since a surjective homomorphism
to from $\Z_l$ to $\Z_l$ is an isomorphism.
This completes the proof of Theorem \ref{Th4.1}.
\end{proof}

As an application of Theorem \ref{Th4.1} and the Algebraic Suspension Theorem \cite{Friedlander1},
 we get the homotopy  groups of $|(C_{p,d}(\P^{n})_K)_{et}|$ up to  $2d$.
\begin{corollary} The first $2d+1$ etale homotopy groups of $C_{p,d}(\P^n)_K$ are given by the formula
$$
 \pi_k(|(C_{p,d}(\P^{n})_K)_{et}|)\cong\left\{
\begin{array}{lll}
 \Z_l, &\hbox{if $k\leq min\{2d,2(n-p)\}$ and even,}\\
 0, &\hbox{all other $k<2d$.}
\end{array}
\right.$$
\end{corollary}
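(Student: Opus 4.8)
The plan is to follow verbatim the strategy used for the complex corollary immediately after Theorem~\ref{Th2}, replacing ordinary homotopy groups by etale homotopy groups and $\Z$ by $\Z_l$. The argument rests on two ingredients: the isomorphism range supplied by Theorem~\ref{Th4.1}, together with a computation of the etale homotopy groups of the limit space $|(\mC_{p}(\P^n_K)_{et}|$.

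First I would invoke Theorem~\ref{Th4.1}. The inclusion $\tilde{i}$ being $2d$-connected yields isomorphisms
$$
\pi_k(|(C_{p,d}(\P^{n})_K)_{et}|)\cong \pi_k(|(\mC_{p}(\P^{n})_K)_{et}|)\qquad\text{for all }k\leq 2d .
$$
This is precisely what is established in the final paragraph of the proof of Theorem~\ref{Th4.1}, so it suffices to determine $\pi_k(|(\mC_{p}(\P^{n})_K)_{et}|)$ in the range $k\leq 2d$.

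Second, I would compute the homotopy groups of the limit space by reducing to the case $p=0$. By Friedlander's Algebraic Suspension Theorem (Theorem~4.2 in \cite{Friedlander1}, already used for the lower horizontal arrow of diagram (\ref{eqn4.2})), the iterated suspension $\Sigma^p$ furnishes a homotopy equivalence $|(\mC_{0}(\P^{n-p})_K)_{et}|\to |(\mC_{p}(\P^{n})_K)_{et}|$. Since $\mC_{0}(\P^{n-p})_K\cong \sp^{\infty}(\P^{n-p}_K)$, Friedlander's Corollary~4.4 (the same computation invoked in the proof of Lemma~\ref{lemma4.2}) gives
$$
\pi_k(|(\mC_{p}(\P^{n})_K)_{et}|)\cong\pi_k(|\sp^{\infty}(\P^{n-p}_K)_{et}|)\cong
\left\{
\begin{array}{ll}
\Z_l,&\hbox{if }0<k\leq 2(n-p)\hbox{ and }k\hbox{ even},\\
0,&\hbox{otherwise.}
\end{array}
\right.
$$

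Combining the two ingredients over the overlapping range $k\leq 2d$ produces exactly $\Z_l$ when $k$ is even and $k\leq\min\{2d,2(n-p)\}$, and $0$ for all other $k<2d$, which is the asserted formula. I do not expect a genuine obstacle here: the argument is purely formal once Theorem~\ref{Th4.1} and the cited results of Friedlander are in hand. The only point requiring care is the index bookkeeping in the suspension step --- matching $\mC_{p}(\P^{n})_K$ with $\mC_{0}(\P^{n-p})_K$ so that the bound $2(n-p)$ emerges correctly --- together with the observation that the $\min$ in the final formula is forced precisely because Theorem~\ref{Th4.1} supplies the isomorphism only through degree $2d$, whereas the limit space carries nontrivial groups all the way up to degree $2(n-p)$.
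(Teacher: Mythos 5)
Your proposal is correct and follows essentially the same route as the paper: combine the isomorphisms $\pi_k(|(C_{p,d}(\P^{n})_K)_{et}|)\cong \pi_k(|(\mC_{p}(\P^{n})_K)_{et}|)$ for $k\leq 2d$ from Theorem~\ref{Th4.1} with the known computation of $\pi_k(|(\mC_{p}(\P^{n})_K)_{et}|)$. The only cosmetic difference is that the paper quotes Friedlander's homotopy equivalence of $|(\mC_{p}(\P^{n})_K)_{et}|$ with $K(\Z_l,2)\times\cdots\times K(\Z_l,2(n-p))$ wholesale, whereas you unpack that citation via the Algebraic Suspension Theorem and the $\sp^{\infty}(\P^{n-p}_K)$ computation; both yield the identical group calculation.
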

\begin{proof}
From the proof to Theorem \ref{Th4.1}, we have $\pi_k(|(C_{p,d}(\P^{n})_K)_{et}|)\to \pi_k(|(\mC_{p}(\P^{n})_K)_{et}|)$ for $k\leq 2d$.
Recall the fact that
$|(\mC_{p}(\P^{n})_K)_{et}|$ is homotopy equivalent to the product $K(\Z_l, 2)\times\cdots\times K(\Z_l,2(n-p))$ (cf. \cite{Friedlander1}),
in particular, $$\pi_k(|(\mC_{p}(\P^{n})_K)_{et}|)\cong\left\{
\begin{array}{lll}
 \Z_l,& \hbox{ if $k\leq 2(n-p)$ and even,}\\
 0,&\hbox{otherwise.}
\end{array}
\right.$$
\end{proof}


\begin{thebibliography}{AAAA}

\bibitem[D]{Dold}
A. Dold,
{\sl Homology of symmetric products and other functors of complexes.}
Ann. of Math. (2) 68 1958 54--80.

\bibitem[DT]{Dold-Thom}
A. Dold and R. Thom,
{\sl Quasifaserungen und unendliche symmetrische Produkte.} (German)
Ann. of Math. (2) 67 1958 239--281.

\bibitem[ES]{Elizondo-Srinivas}
E. Javier Elizondo and V. Srinivas,
{\sl Some remarks on Chow varieties and Euler-Chow series.}
J. Pure Appl. Algebra 166 (2002), no. 1-2, 67--81. 

\bibitem[F]{Friedlander1} E. Friedlander, {\sl Algebraic cycles, Chow
varieties, and Lawson homology.}  Compositio Math. 77 (1991), no. 1,
55--93.















\bibitem[L1]{Lawson1}
H. B. Lawson, {\sl Algebraic cycles and homotopy theory.}, Ann. of
Math. {\bf 129}(1989), 253-291.

\bibitem[L2]{Lawson2}H. B. Lawson, {\sl Spaces of algebraic
cycles.} pp. 137-213 in Surveys in Differential Geometry, 1995
vol.2, International Press, 1995.






\bibitem[Mi]{Milgram}
R.J. Milgram,
{\sl The homology of symmetric products.}
Trans. Amer. Math. Soc. 138 1969 251--265.


\bibitem[W]{Whitehead} George W. Whitehead,
{\sl Elements of homotopy theory.}
Graduate Texts in Mathematics, 61. Springer-Verlag, New York-Berlin, 1978.
xxi+744 pp. ISBN: 0-387-90336-4

\end{thebibliography}
\end{document}